\newcommand{\norm}[1]{\left \| #1 \right \|}
 \newcommand{\lsup}[1]{\underset{#1\to\infty}{\overline{\lim}}}
\providecommand{\@doiprefix}{https://doi.org/}
\providecommand{\@DOI}{}
\begin{document}

We study stochastic processes on Adaptive Networks, consisting of node variables and edge variables that evolve stochastically in time \cite{Kuehn2012}. Both the node and edge variables take values in a finite state space, with Poissonian jumps between states. The intensity of the edge variable flipping is a function of the states of the nodes at either end, and the flipping of the node variables is a `mean-field' of the states of all edges and accompanying nodes. There are many applications of this sort of network \cite{Gross2009,banerjee2024co,durrett2026}: including neuroscience (the edge dynamics corresponds to slow synaptic dynamics and learning) \cite{atay2006neural,sreenivasan2019and,maclaurin2026large,coombes2023neurodynamics}, epidemiology (for example, if individuals are more likely to self-isolate if they get infected) \cite{volz2009epidemic}, social networks \cite{lanchier2024stochastic}. 

There is increasing mathematical interest in particle systems on graphs with dynamic edges in recent years \cite{durrett2026}. Volz and Meyers introduced the `neighbor exchange' SIR model for the spread of an infection, meaning that the composition of neighbours of particular agents evolves stochastically. Ball and Britton introduced a model with preventative rewiring \cite{ball2022epidemics}. Durrett introduced the adaptive voter model \cite{durrett2010random,basu2017evolving}. 

Despite the many applications, there does not seem to exist a general `Mckean-Vlasov' type equation that yields the large $n$ limiting dynamics for Hawkes Processes. Previous work, including by this author, only obtained limiting equations by resorting to implicit delay-stochastic differential equations \cite{maclaurin2026large}. There are several studies of related systems, including \cite{Gkogkas2023}, Many scholars have also considered the hydrodynamic limit of large networks of interacting neurons on inhomogeneous graphs, including \cite{Chevallier2019,Lucon2020a,AgatheNerine2022,Bramburger2023,Bramburger2024,avitabile2026neural}.
 
 Another very important application of these results is that it yields a general formalism for generating random graphs, complementing for instance \cite{Lovasz2012}. If, for example, the empirical measure concentrates at a single value in the large $n$ limit, then it will automatically yield an accurate understanding of the local structure of the graph. 

\subsection{Notation}

Throughout this paper we work in a complete probability space $(\Omega,\mathcal{F},\mathbb{P})$, equipped with a filtration $\mathbb{F} = (\mathcal{F}_t)_{t\geq 0}$ satisfying the usual conditions.
The index set of the nodes is $I_n := \lbrace 1,2,\ldots,n \rbrace$.  Let $\Gamma$ and $\Gamma_E$ be discrete sets, specifying the possible states of the node variables and edge variables. Let $\mathcal{D}([0,T], \Gamma)$ and $\mathcal{D}([0,T], \Gamma_E)$ specify the set of all cadlag trajectories taking values in (respectively) $\Gamma$ and $\Gamma_E$. This means that any $x \in \mathcal{D}([0,T], \Gamma)$ must be (i) piecewise constant, (ii) with only a finite number of discontinuities, (iii) possessing left limits and continuous from the right.

Let $\mathcal{X}_T \subset \mathcal{D}([0,T],\mathbb{R})$ consist of all cadlag functions that are (i) non-decreasing, and (ii) equal to $0$ at time $0$. For any $x,y \in \mathcal{D}([0,T], \Gamma)$, define
\begin{align}
d_t(x,y) = \int_0^t \mathbf{1}\lbrace x_s \neq y_s \rbrace ds. \label{eq: hilbert norm}
\end{align}
We note that $d_t$ generates the Skorohod Toplogy on $\mathcal{D}([0,T], \Gamma)$, however $d_t$ is not a complete metric. For a positive integer $a$, let $\mathcal{Z}_{T,a}$ consist of all $x \in \mathcal{D}([0,T], \Gamma)$ such that the number of points of discontinuity is less than or equal to $a$. One can check that $\mathcal{Z}_{T,a}$ is a compact subset of $ \mathcal{D}([0,T], \Gamma)$, and that $d_t$ is complete over  $\mathcal{Z}_{T,a}$. 

Throughout this paper, $\mathcal{E}$ is a compact Riemannian Manifold, and $d_{\mathcal{E}}: \mathcal{E} \times \mathcal{E} \to \mathbb{R}_{\geq 0}$ is a complete metric on $\mathcal{E}$.  Let $B_{\epsilon}(\theta) \subset \mathcal{E}$ be the open ball about $\theta \in \mathcal{E}$ of radius $\epsilon$.
Let $d_{W,T}: \mathcal{P}\big( \mathcal{E} \times  \mathcal{D}([0,T], \Gamma) \big) \times \mathcal{P}\big( \mathcal{E} \times  \mathcal{D}([0,T],\Gamma) \big)  \mapsto \mathbb{R}$ be the Wasserstein Distance, i.e 
\begin{equation}
d_W(\mu,\nu) = \inf_{\zeta} \big\lbrace \mathbb{E}^{\zeta}\big[ d_{\mathcal{E}}(\theta,\tilde{\theta}) + d_T(x,y) \big] \big\rbrace,
\end{equation}
where the infimum is over all couplings $\zeta \in \mathcal{P}\big( \mathcal{E} \times  \mathcal{D}([0,T], \Gamma) \times  \mathcal{E} \times  \mathcal{D}([0,T], \Gamma) \big)$. 

We let $\mathcal{Y} \subseteq [0,1]^{\Gamma_E \times \Gamma}$ consist of all $(g_{a,\alpha})_{a\in \Gamma_E , \alpha \in \Gamma}$ such that
 \begin{align}
 \sum_{a\in \Gamma_E} \sum_{\alpha\in\Gamma} g_{a,\alpha} = 1.
 \end{align}
Throughout this paper we will identify $\mathcal{Y}$ with $\mathcal{P}(\Gamma_E \times \Gamma)$.



\section{Model Outline}
We consider a network of inhomogeneous Poisson Processes on a random adaptive network. Node $j \in I_n := \lbrace 1,\ldots, n \rbrace$ is assigned a position $\theta^j_n$ in a compact Riemannian Manifold $\mathcal{E}$.

\begin{hypothesis}
There exists a measure $\mu_{\mathcal{E}} \in \mathcal{P}(\mathcal{E})$ such that
\begin{align}
\lim_{n\to\infty} n^{-1}\sum_{j\in I_n} \delta_{\theta^j_n} = \mu_{\mathcal{E}}.
\end{align}
It is also assumed that $\mu_{\mathcal{E}}$ is absolutely continuous with respect to the metric measure $\mu_{\mathcal{E}}$ on $\mathcal{E}$, with continuous density.
\end{hypothesis}

It is assumed that the edges $\lbrace J_t^{ jk} \rbrace_{j , k \in I_n \fatsemi 1\leq  q \leq p}$ take values in a discrete set $\Gamma_E$, and that the node variables $\lbrace \sigma^j_t \rbrace_{j\in I_n}$ take values in a discrete set $\Gamma$.  We assume that the edge-transitions depend on the states of the end variables, i.e. for $h\ll 1$ and $\alpha \in \Gamma_E$, if $ J_{t}^{j  k} = b$ then
 \begin{align}
 \mathbb{P}\big(  J_{t+h}^{j k} = a \; \big| \; \mathcal{F}_t \big) = h l_{ b \mapsto a} \big( \sigma^j_t ,  \sigma^{k}_t \big) + O(h^2) .
 \end{align}
 Define $\hat{\mu}^{n,j}_t \in \mathcal{Y} := \mathcal{P}(\Gamma_E \times \Gamma)$ to represent the local empirical measure containing information about the neighborhood of node $j$. It is such that, writing, for $\zeta \in \Gamma$ and $a\in \Gamma_E$,
\begin{align}
\hat{\mu}^{n,j}_t(a,\zeta) =& n^{-1}\sum_{k\in I_n} \mathbf{1} \big\lbrace J_t^{j k} =a \big\rbrace \mathbf{1}\big\lbrace  \sigma^{k}_t = \zeta \big\rbrace .
\end{align}
It is assumed that the transitions of the nodes are Poissonian, and such that for all $\alpha \neq \beta$, there exists a Lipschitz function
 \begin{align}
 f_{\alpha \mapsto \beta}: \mathcal{Y} \mapsto  \mathbb{R}^+ \label{eq: f transition function}
 \end{align}
 such that if $\sigma_t^j = \alpha$ then for $h \ll 1$,
 \begin{align}
  \mathbb{P}\big(  \sigma_{t+h}^{j} = \beta \; \big| \; \mathcal{F}_t \big) =h f_{\alpha \mapsto \beta}\big(  \hat{\mu}^{n,j}_t  \big) + O(h^2).
 \end{align}
This sort of model is well-motivated in neuroscience and epidemiology. See for instance \cite{maclaurin2026large2} for further details.
\begin{hypothesis} \label{Edge Jump Assumptions}
We assume that either (\textbf{A}) with unit probability, for all $t \geq 0$, $J^{jk}_t = J^{kj}_t$, or that (\textbf{B}) the transitions are independent, i.e. for $j \neq k$ and any $T > 0$,
\begin{align}
\lim_{h\to 0} h^{-1} \mathbb{P}\big( \text{ For some }t\leq T, \;   J_{t+h}^{j k} \neq J^{jk}_t \text{ and }J_{t+h}^{kj} \neq J^{kj}_t    \big) = 0.
\end{align}
\end{hypothesis}
 The initial conditions $\lbrace \sigma_0^j \rbrace_{j\in I_n}$ and $\lbrace J^{jk}_0 \rbrace_{j,k\in I_n}$ are constants (and they can also depend on $n$, although its neglected from the notation), and it is assumed that the empirical distribution of the initial conditions and spatial locations converges weakly, i.e. it is assumed that
 \begin{hypothesis}
There exists a continuous function $v: \mathcal{E} \times \mathcal{E} \mapsto \mathcal{P}(\Gamma \times \Gamma \times \Gamma_E)$ such that for any continuous function $\mathcal{B}: \mathcal{E}\times\mathcal{E} \times \Gamma \times \Gamma \times \Gamma_E \mapsto \mathbb{R}$,
 \begin{multline}
\sup_{\alpha_1,\alpha_2\in\Gamma} \sup_{a\in \Gamma_E} \lim_{n\to\infty} \bigg| \int_{\mathcal{E}}  \int_{\mathcal{E}} \mathcal{B}(x,y,\alpha_1,\alpha_2,a) v_{xy}(\alpha_1,\alpha_2,a) d\mu_{\mathcal{E}}(x) d\mu_{\mathcal{E}}(y) \\ - n^{-2}\sum_{j,k\in I_n} \mathcal{B}(\theta^j_n,\theta^k_n,\alpha_1,\alpha_2,a) \mathbf{1}\lbrace \sigma^j_0 = \alpha_1, \sigma^k_0 = \alpha_2 , J^{jk}_0 = a  \rbrace \bigg| = 0.
 \end{multline}
 \end{hypothesis}

 \subsection{ Main Results}
 We first state our main result. 
 \begin{theorem} \label{Main Theorem}
 There exists $\mu \in \mathcal{P}\big( \mathcal{E} \times \mathcal{D}([0,T],\Gamma) \big)$ (this is precisely specified in Theorem \ref{Theorem Existence of Limit Law} below), such that for any $\epsilon > 0$,
 \begin{align}
 \lsup{n} n^{-1}\log \mathbb{P}\big( d_W(\hat{\mu}^n , \mu) \geq \epsilon \big) < 0.
 \end{align}
 \end{theorem}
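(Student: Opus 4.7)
The plan is to prove the exponential concentration bound by coupling the true $n$-particle system to an auxiliary system of conditionally independent particles driven by the \emph{limit} intensities, and then combining a standard concentration argument for the auxiliary system with a Gronwall-type control of the coupling error. Throughout, the candidate limit $\mu$ is furnished by Theorem \ref{Theorem Existence of Limit Law}, which I will treat as a given fixed point of a self-consistent equation of the form $\mu = \Phi(\mu)$ on a suitable compact subset of $\mathcal{P}(\mathcal{E} \times \mathcal{D}([0,T],\Gamma))$, obtained via Picard iteration using Lipschitz properties of $f_{\alpha \mapsto \beta}$ and $l_{b \mapsto a}$.

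First, I would construct the auxiliary process $(\tilde{\sigma}^j_t, \tilde{J}^{jk}_t)_{j,k \in I_n}$ on the same probability space, using the same underlying Poisson random measures as the original process, but with intensities computed from the deterministic limit $\mu$: the $\tilde{\sigma}^j$ are driven by the rates $f_{\alpha \mapsto \beta}(\hat{\mu}^{\mu,j}_t)$, where $\hat{\mu}^{\mu,j}_t$ is the mean-field quantity obtained by integrating the limit law $\mu$ against the spatial interaction kernel evaluated at $\theta^j_n$, and analogously for edges. A short calculation shows that $(\tilde{\sigma}^j, \tilde{J}^{jk})_{j,k}$ are conditionally independent given the spatial positions (up to the symmetry constraint in Hypothesis \ref{Edge Jump Assumptions}(A), which only couples pairs and is easy to handle).

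Next, I would establish two separate exponential concentration bounds. For the auxiliary empirical measure $\tilde{\mu}^n$, conditional independence together with the boundedness of $\Gamma$ and $\Gamma_E$ allows a standard Sanov/Chernoff-type argument to yield
\begin{align}
\lsup{n} n^{-1} \log \mathbb{P}\big( d_W(\tilde{\mu}^n, \mu) \geq \epsilon/2 \big) < 0,
\end{align}
after first verifying tightness via the bound on the number of jumps (using the compact sets $\mathcal{Z}_{T,a}$ and the fact that the total jump rate is uniformly bounded). For the coupling error $d_W(\hat{\mu}^n, \tilde{\mu}^n)$, I would use the standard representation of Poisson jump processes via common thinning and bound
\begin{align}
\mathbb{E}\big[ d_t(\sigma^j, \tilde{\sigma}^j) + \sum_k d_t(J^{jk}, \tilde{J}^{jk}) \big]
\end{align}
in terms of $\int_0^t \mathbb{E}\big[ d_W(\hat{\mu}^n_s, \mu_s) \big] ds$, via Lipschitz continuity of $f$ and $l$; Gronwall then yields the deterministic bound on the coupling error, and a union-style argument promotes it to an exponential bound that inherits the rate from the concentration of $\tilde{\mu}^n$.

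The main obstacle is controlling the feedback: a priori the coupling error depends on $d_W(\hat{\mu}^n, \mu)$, which is what we are trying to bound. I would resolve this via a bootstrap/stopping-time argument, defining $\tau_\epsilon = \inf\{t : d_W(\hat{\mu}^n_t, \mu_t) \geq \epsilon\}$ and showing that on $\{t \leq \tau_\epsilon\}$ the Lipschitz bounds close up, so that the coupling error remains small with high probability until $\tau_\epsilon$, while on $\{t > \tau_\epsilon\}$ one derives a contradiction from the exponential bound on $\tilde{\mu}^n$. A secondary technical point, which I would handle separately, is the symmetric edge case (A): here independence fails pairwise, but one can work with the empirical measure of \emph{unordered} pairs and note that the symmetrization introduces only an $O(n^{-1})$ correction in each coordinate, which is absorbed into the exponential rate.
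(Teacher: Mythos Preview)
Your strategy is essentially the paper's: build an auxiliary $n$-particle system $(\tilde\sigma^j,\tilde J^{jk})$ whose node intensities are computed from the fixed-point law $\mu$, get exponential concentration of its empirical measure $\tilde\mu^n$ toward $\mu$ via a Sanov-type argument (independence of the $\tilde\sigma^j$), and then control $d_W(\hat\mu^n,\tilde\mu^n)$ by coupling both systems to common Poisson drivers and bounding the residual jump intensities by Lipschitz estimates.

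The one place where the paper proceeds differently from your sketch is the closure of the feedback loop. You propose a stopping-time bootstrap on $d_W(\hat\mu^n_t,\mu_t)$ together with a Gronwall bound on \emph{expectations}; but an expectation bound on the coupling error does not by itself yield the exponential rate you need. The paper instead inserts an intermediate quantity
\[
\eta^n_t \;=\; n^{-1}\sum_{j}\big\|\tilde G^j_t-\hat G^j_t\big\|,
\]
which depends \emph{only} on the auxiliary system (and hence can be controlled exponentially by a direct Chernoff/Poisson argument, with no circularity). Once $\eta^n_t$ is small, the residual intensities in the coupling are bounded \emph{pathwise} by $c_T(\delta^n_t+\varphi^n_t+\eta^n_t)$, and the paper dominates $(\delta^n_t,\varphi^n_t)$ by an explicit autonomous pair of counting processes $(\tilde z_n,\tilde u_n)$ whose exponential tails follow from standard Poisson estimates. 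This pathwise domination is what replaces your expectation-Gronwall step and delivers the large-deviations rate; your stopping-time idea could be made to work, but would still require this kind of pathwise intensity comparison rather than an $L^1$ bound.
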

 The limiting probability law $\mu$ is (in general) non-autonomous. This makes it difficult to analyze and identify phase transitions and bifurcations (most of the dynamical systems machinery is geared towards autonomous systems). However if we make the additional assumption that the transition rate of the edge $J^{jk}_t$ is independent of $\sigma^j_t$ (which in general implies that $J^{jk}_t \neq J^{kj}_t$), then we obtain an autonomous expression. This greatly facilitates an analysis of pattern formation and phase transitions.
 \begin{hypothesis} \label{Hypothesis Additional Autonomous}
 Suppose that (\textbf{A}) of Hypothesis \ref{Edge Jump Assumptions} holds, and that in addition, the intensity of the connectivity changes only depends on one of the edges, i.e.
 \begin{align}
  l_{ b \mapsto a} \big( \sigma^j_t ,  \sigma^{k}_t \big) :=   \tilde{l}_{ b \mapsto a} \big(  \sigma^{k}_t \big).
\end{align}
\end{hypothesis}
Assumption \ref{Hypothesis Additional Autonomous} is well-motivated in neuroscience, since there is a directionality to the signal propagation.
  \begin{theorem} \label{Theorem Convergence Empirical Measure Autonomous}
  Suppose that Hypothesis \ref{Hypothesis Additional Autonomous} holds in addition to the other hypotheses. Then for any continuous function $\mathcal{H}: \mathcal{E}\times\mathcal{E}\times \Gamma\times\Gamma_E \mapsto \mathbb{R}$ and any $T > 0$, $\mathbb{P}$-almost-surely,
  \begin{multline}
  \lim_{n\to\infty} \sup_{\alpha \in \Gamma}\sup_{a\in\Gamma_E} \sup_{t\leq T}\bigg| \int_{\mathcal{E}} \int_{\mathcal{E}} \mathcal{H}(x,y,\alpha,a) p_{xy}(\alpha,a)d\mu_{\mathcal{E}}(x) d\mu_{\mathcal{E}}(y) \\- n^{-2}\sum_{j,k\in I_n} \mathcal{H}(\theta^j_n,\theta^k_n , \alpha,a) \mathbf{1}\lbrace \sigma^k_t = \alpha , J^{jk}_t = a \rbrace \bigg| =0.
  \end{multline}
Here $p_{\theta\eta,t}$ is the unique solution of the following system of PDEs. For $\theta,\eta \in \mathcal{E}$, and $\alpha \in \Gamma$, $a\in \Gamma_E$,  
  \begin{align} \label{eq: p theta eta t initial}
  p_{\theta\eta,0}(\alpha,a) =  \sum_{\beta\in\Gamma} v_{\theta\eta}(\beta,\alpha,a) 
  \end{align}
  and for $t> 0$,
  \begin{multline} \label{eq: p theta eta t evolution}
  \frac{d}{dt}p_{\theta\eta,t}(\alpha,a) = \sum_{\zeta\in\Gamma : \zeta \neq \beta}\big( f_{\zeta \mapsto \alpha}(G_{\eta,t})p_{\theta\eta,t}(\zeta,a) -  f_{\alpha \mapsto \zeta}(G_{\eta,t})p_{\theta\eta,t}(\alpha,a)  \big) \\
  + \sum_{b\in \Gamma_E \; : b\neq a} \big( \tilde{l}_{b\mapsto a}(\alpha) p_{\theta\eta,t}(\alpha,b)  -  \tilde{l}_{a\mapsto b}(\alpha) p_{\theta\eta,t}(\alpha,a)  \big)
  \end{multline}
  where $G_{\theta,t} \in \mathcal{P}(\Gamma\times\Gamma_E)$ is such that for $\alpha\in\Gamma$, $a\in \Gamma_E$,
  \begin{align}
  G_{\theta,t}(\alpha,a) =  \int_{\mathcal{E}}  p_{\theta\eta,t}(\alpha,a) d\mu_{\mathcal{E}}(\eta)
  \end{align}
 \end{theorem}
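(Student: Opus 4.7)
The plan combines well-posedness of the limit system with a martingale-plus-Gronwall argument for a pairwise empirical measure. For existence and uniqueness of \eqref{eq: p theta eta t initial}--\eqref{eq: p theta eta t evolution}, I would recast the system as a fixed point of the map $G\mapsto G'$ on $C([0,T]\times\mathcal{E},\mathcal{P}(\Gamma\times\Gamma_E))$: given $G$, the evolution \eqref{eq: p theta eta t evolution} becomes, for each pair $(\theta,\eta)$, a linear ODE with bounded coefficients depending Lipschitz-continuously on $G$, solvable by Duhamel; one then sets $G'_{\theta,t}=\int_{\mathcal{E}} p_{\theta\eta,t}\,d\mu_{\mathcal{E}}(\eta)$. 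Lipschitz continuity of the $f_{\alpha\mapsto\beta}$ from \eqref{eq: f transition function} together with the boundedness of $\tilde{l}$ makes this a contraction on $[0,\tau]$ for small $\tau$, and iteration covers $[0,T]$ with uniform bounds; joint continuity of $(\theta,\eta)\mapsto p_{\theta\eta,t}$ is inherited from the continuity of $v$ and of the density of $\mu_{\mathcal{E}}$.

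For the convergence, introduce the pairwise empirical measure
\begin{align}
\hat{\nu}^n_t := n^{-2}\sum_{j,k\in I_n}\delta_{(\theta^j_n,\theta^k_n,\sigma^k_t,J^{jk}_t)}\in \mathcal{P}\big(\mathcal{E}\times\mathcal{E}\times\Gamma\times\Gamma_E\big),
\end{align}
and write $\langle\hat{\nu}^n_t,\mathcal{H}_{\alpha,a}\rangle$ for the double sum inside the supremum in the theorem statement. Dynkin's formula applied to the pure-jump dynamics yields
\begin{align}
\langle\hat{\nu}^n_t,\mathcal{H}_{\alpha,a}\rangle-\langle\hat{\nu}^n_0,\mathcal{H}_{\alpha,a}\rangle = M^{n,\alpha,a}_t + \int_0^t D^{n,\alpha,a}_s\, ds,
\end{align}
where $M^{n,\alpha,a}$ is a pure-jump martingale with predictable quadratic variation of order $n^{-1}$ (being a sum of $O(n^2)$ compensated jump martingales with weights $O(n^{-2})$ and uniformly bounded intensities), so Doob combined with Borel--Cantelli gives $\sup_{t\leq T}|M^{n,\alpha,a}_t|\to 0$ almost surely. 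Under Hypothesis~\ref{Hypothesis Additional Autonomous} the edge-flip part of $D^{n,\alpha,a}_s$ depends only on $\sigma^k_s$ (not on $\sigma^j_s$) and hence factors into precisely the second line of \eqref{eq: p theta eta t evolution} after passing to the empirical-measure limit; the node-flip part contains the local mean field $\hat{\mu}^{n,k}_s$ inside $f_{\zeta\mapsto\alpha}$, which must be identified with $G_{\theta^k_n,s}$. Once this is in hand, a Gronwall argument on $\Delta^n_t:=\sup_{\alpha,a,s\leq t}|\langle\hat{\nu}^n_s,\mathcal{H}_{\alpha,a}\rangle - \int_{\mathcal{E}}\int_{\mathcal{E}}\mathcal{H}(x,y,\alpha,a)p_{xy,s}(\alpha,a)\,d\mu_{\mathcal{E}}(x)d\mu_{\mathcal{E}}(y)|$ yields $\Delta^n_T\to 0$ almost surely, with $\Delta^n_0\to 0$ supplied by the initial-condition hypothesis and the Riemann-sum error in $D^{n,\alpha,a}_s$ controlled uniformly by continuity on the compact $\mathcal{E}$.

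I expect the main obstacle to be the uniform-in-$k$ replacement of $\hat{\mu}^{n,k}_s$ by $G_{\theta^k_n,s}$: this is a pointwise-in-position evaluation of $\hat{\nu}^n_s$ in its second coordinate, so the weak convergence supplied by Theorem~\ref{Main Theorem} does not directly apply. My approach is to exploit continuity of $\eta\mapsto G_{\eta,s}$ together with Lipschitz continuity of the $f_{\alpha\mapsto\beta}$: for a finite $\varepsilon$-net $\{\eta_\ell\}$ on the compact manifold $\mathcal{E}$, a concentration estimate for $\hat{\nu}^n_s$ paired against smooth bumps centred at each $\eta_\ell$ controls $\hat{\mu}^{n,k}_s$ for those $k$ with $\theta^k_n$ near $\eta_\ell$; continuity of $G_{\cdot,s}$ then transfers the bound uniformly across $\mathcal{E}$. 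Hypothesis~\ref{Hypothesis Additional Autonomous} is essential here, since it ensures the conditional dynamics of $(\sigma^k,J^{jk})$ given $\hat{\mu}^{n,k}$ close up autonomously with Markovian rates, rather than requiring the implicit delay equations that appear in the general non-autonomous setting.
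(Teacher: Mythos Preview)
Your route differs from the paper's. The paper does not run Dynkin's formula on $\hat{\nu}^n$ at all; it observes that under Hypothesis~\ref{Hypothesis Additional Autonomous} the map $\psi_{\theta,t}$ of \eqref{eq: psi definition} loses its dependence on the trajectory argument, so the limiting driving field becomes a deterministic function $\tilde{\psi}_{\theta,t}(\mu)$ of the fixed-point law $\mu$ from Theorem~\ref{Theorem Existence of Limit Law}. The PDE \eqref{eq: p theta eta t evolution} is then simply the forward equation for that fixed point, and convergence is inherited from Lemma~\ref{Lemma convergence of tilde mu n t} (exponential concentration of each local field $\tilde{G}^j$ around $\hat{G}^j$) together with the coupling behind Theorem~\ref{Main Theorem}. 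Your approach is more self-contained, bypassing the implicit fixed-point characterization and the Kurtz--Anderson coupling entirely, at the price of redoing the mean-field replacement by hand.

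There is, however, a gap in your $\epsilon$-net step. Pairing $\hat{\nu}^n_s$ against a bump $\phi_\ell$ in its first $\mathcal{E}$-coordinate returns $n^{-1}\sum_{j}\phi_\ell(\theta^j_n)\,\hat{\mu}^{n,j}_s(a,\alpha)$, a \emph{weighted average} of the local fields over $j$ near $\eta_\ell$, not any individual $\hat{\mu}^{n,k}_s$. The Dynkin drift contains $f_{\zeta\mapsto\alpha}(\hat{\mu}^{n,k}_s)$ inside the $k$-sum, and Lipschitz continuity only converts the replacement error into $n^{-1}\sum_k\|\hat{\mu}^{n,k}_s-G_{\theta^k_n,s}\|$, an average of absolute values which is not a continuous linear functional of $\hat{\nu}^n_s$: two local fields can deviate from $G$ by $\pm\delta$ while their bumped average matches perfectly. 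To close your Gronwall loop you need a separate concentration estimate giving $\sup_k\sup_{t\leq T}\|\hat{\mu}^{n,k}_t-G_{\theta^k_n,t}\|\to 0$ (or at least the $k$-averaged absolute deviation), exploiting the conditional independence of the edges $\{J^{kl}\}_l$ given the node trajectories. This is exactly the content of the paper's Lemma~\ref{Lemma convergence of tilde mu n t}; once you import that Chernoff-type bound your argument closes, but the $\epsilon$-net alone does not deliver it.
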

  
 \begin{remark}
 Theorem \ref{Theorem Convergence Empirical Measure Autonomous} is equivalent to
 \[
 \lim_{n\to\infty} \sup_{t\leq T} \big\lbrace d_W\big(\tilde{\mu}^n_t , \nu_t) \big\rbrace= 0
 \]
 where
 \[
 \tilde{\mu}^n_t = n^{-2}\sum_{j,k\in I_n} \delta_{\theta^j_n , \theta^k_n, J^{jk}_t , \sigma^k_t} \in \mathcal{P}\big( \mathcal{E} \times \mathcal{E} \times \Gamma_E \times \Gamma \big),
 \]
 and $\nu_t \in \mathcal{P}\big( \mathcal{E} \times \mathcal{E} \times \Gamma_E \times \Gamma\big)$ is such that for measurable $A,B \subseteq \mathcal{E}$, and any $a\in\Gamma_E$ and $\alpha\in\Gamma$,
 \[
 \nu_t(A\times B \times a \times \alpha) = \int_{A}\int_{B} p_{\theta\eta,t}(\alpha,a) d\mu_{\mathcal{E}}(\theta) d\mu_{\mathcal{E}}(\eta) .
 \]
 \end{remark}
 We outline the proof of Theorem \ref{Theorem Convergence Empirical Measure Autonomous}.
 \begin{proof}
A consequence of Hypothesis \ref{Hypothesis Additional Autonomous} is that the function $\psi_{\theta,t}: \mathcal{D}([0,t],\Gamma) \times \mathcal{P}\big( \mathcal{D}([0,t],\Gamma) \big) $  ( defined in \eqref{eq: psi definition}) is independent of its first argument, and so we can write
\[
\tilde{\psi}_{\theta,t}(\mu) := \psi_{\theta,t}(\cdot , \mu).
\]
Lemma \ref{Lemma convergence of tilde mu n t} implies that for any $\epsilon > 0$,
\begin{align}
\lsup{n} n^{-1}\log \mathbb{P}\big(  n^{-1} \sum_{j\in I_n} \sup_{t\leq T} d_W\big(\hat{\mu}^{n,j}_t , \tilde{\psi}_{\theta^j_n,t}(\mu) \big) \geq \epsilon \big) < 0,
\end{align}
and therefore
\[
\lim_{n\to\infty} n^{-1} \sum_{j\in I_n} \sup_{t\leq T} d_W\big(\hat{\mu}^{n,j}_t , \tilde{\psi}_{\theta^j_n,t}(\mu) \big) = 0.
\]
\end{proof}

 \section{The Kinetic Limit}
We must first specify the limiting probability law for Theorem \ref{Main Theorem}. It satisfies a nonlinear implicit equation with delays. We start by specifying how the average connectivity at time $t$ is determined by (i) the initial value of the connectivity and (ii) the trajectories of the afferent spin variables upto time $t$.

\begin{lemma}
For any $c \in \Gamma_E$ any $\theta \in \mathcal{E}$, any $z , y \in \mathcal{D}([0,T] , \Gamma)$, there exists a unique $ \lbrace j_{\theta,\eta} \rbrace_{\theta,\eta \in \mathcal{E}}  \in   \mathcal{D}([0,T] , \mathcal{P}( \Gamma_E) ) $, written $ j_{\theta\eta} := (j_{\theta\eta,t})_{t\leq T}$ such that for all $t\leq T$, and all $a \in \Gamma_E$,
\begin{align}
j_{\theta\eta,t}(a) = v_{\theta\eta}(z_0, y_0,a)+ \sum_{b \in \Gamma_E : b \neq a} \int_0^t \bigg\lbrace j_{\theta\eta,s}(b) l_{b\mapsto a}(z_s , y_s)   - j_{\theta\eta,s}(a)    l_{a\mapsto b}(z_s , y_s)  \bigg\rbrace ds
\end{align}
We write
\begin{align}
\Psi_{\theta \eta , t} &:  \mathcal{D}([0,t] , \Gamma)^2 \mapsto \mathcal{D}\big([0,t] , \mathcal{P}( \Gamma_E) \big) \\
\Psi_{\theta \eta,t}(  z , y) &:=   j_{\theta\eta}.
\end{align}
Furthermore for each $T > 0$, there exists a constant $C_T$ such that for all $t\leq T$,
\begin{align}
\sup_{\theta,\eta\in \mathcal{E}} \norm{ \Psi_{\theta \eta,t}(  z , x) -  \Psi_{\theta \eta,t}(  \tilde{z} , \tilde{x}) } \leq C_T \big( \norm{z - \tilde{z}}_t + \norm{x - \tilde{x}}_t \big). \label{Lipschitz Bound}
\end{align}
\end{lemma}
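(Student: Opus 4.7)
The plan is to read the defining equation as a linear inhomogeneous Volterra system on the finite-dimensional vector space $\mathbb{R}^{\Gamma_E}$, parametrised by the cadlag inputs $(z,y)$. Since $\Gamma$ is discrete and $z,y\in\mathcal{D}([0,T],\Gamma)$ are piecewise constant with finitely many discontinuities, every coefficient $s\mapsto l_{b\mapsto a}(z_s,y_s)$ is bounded and cadlag. Existence and uniqueness of $j_{\theta\eta}$ then follow from classical Carath\'eodory theory for linear ODEs with bounded measurable coefficients; equivalently one applies Banach's fixed-point theorem to the affine operator defined by the right-hand side on $\mathcal{D}([0,T],\mathbb{R}^{\Gamma_E})$ endowed with a sup norm weighted by $e^{-\lambda s}$ for $\lambda$ large enough to make the operator a contraction on the whole interval.

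Before turning to the Lipschitz bound I would verify that the trajectory remains in the non-negative cone of measures on $\Gamma_E$ with conserved total mass. Summing the defining equation over $a\in\Gamma_E$ cancels the gain and loss terms pairwise, giving $\sum_a j_{\theta\eta,s}(a)=\sum_a v_{\theta\eta}(z_0,y_0,a)$ for every $s\leq T$. Non-negativity follows by a standard tangency argument: if $j_{\theta\eta,s}(a)$ touches $0$ from above, its derivative reduces to $\sum_{b\neq a}j_{\theta\eta,s}(b)l_{b\mapsto a}(z_s,y_s)\geq 0$, so the trajectory cannot leave the non-negative cone. In particular $j_{\theta\eta,s}(a)\leq 1$ for all $s$.

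For the Lipschitz estimate \eqref{Lipschitz Bound}, set $j:=\Psi_{\theta\eta,\cdot}(z,y)$ and $\tilde j:=\Psi_{\theta\eta,\cdot}(\tilde z,\tilde y)$, subtract the two integral equations, and apply the algebraic splitting
\begin{equation*}
j_s(b)l_{b\mapsto a}(z_s,y_s)-\tilde j_s(b)l_{b\mapsto a}(\tilde z_s,\tilde y_s)=j_s(b)\bigl[l_{b\mapsto a}(z_s,y_s)-l_{b\mapsto a}(\tilde z_s,\tilde y_s)\bigr]+\bigl[j_s(b)-\tilde j_s(b)\bigr]l_{b\mapsto a}(\tilde z_s,\tilde y_s).
\end{equation*}
Since $\Gamma$ is discrete the first bracket is bounded in absolute value by $L\cdot\chi\{z_s\neq\tilde z_s\text{ or }y_s\neq\tilde y_s\}$, where $L:=\max_{a,b,\alpha,\beta}l_{b\mapsto a}(\alpha,\beta)$; integrating in $s$ and using $d_t$ from \eqref{eq: hilbert norm} produces a contribution of order $L\bigl(d_t(z,\tilde z)+d_t(y,\tilde y)\bigr)$. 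Combined with the uniform bound $j_s(b)\leq 1$, the symmetric estimate for the loss term, and a continuity bound for the initial datum $|v_{\theta\eta}(z_0,y_0,a)-v_{\theta\eta}(\tilde z_0,\tilde y_0,a)|$, one obtains an inequality of the form
\begin{equation*}
\sup_{s\leq t}\|j_s-\tilde j_s\|_1\leq C_1\bigl(\|z-\tilde z\|_t+\|y-\tilde y\|_t\bigr)+C_2\int_0^t\sup_{r\leq s}\|j_r-\tilde j_r\|_1\,ds,
\end{equation*}
with constants depending only on $|\Gamma_E|$, $L$ and $v$. Gr\"onwall's inequality then yields \eqref{Lipschitz Bound} with $C_T=C_1e^{C_2T}$, uniform in $\theta,\eta\in\mathcal{E}$ because the transition rates do not depend on these parameters and $v$ is continuous on the compact product $\mathcal{E}\times\mathcal{E}$.

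The main obstacle is more bookkeeping than analytic: one must align the choice of norm $\|\cdot\|_t$ appearing in \eqref{Lipschitz Bound} with the semi-metric $d_t$ of \eqref{eq: hilbert norm} and absorb the initial-condition discrepancy $|v_{\theta\eta}(z_0,y_0,\cdot)-v_{\theta\eta}(\tilde z_0,\tilde y_0,\cdot)|$ into the right-hand side --- most cleanly by interpreting $\|z-\tilde z\|_t$ as $d_t(z,\tilde z)$ augmented by a point-mass penalty at $s=0$, or by restricting the Lipschitz claim to pairs of trajectories with coinciding initial values.
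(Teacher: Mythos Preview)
Your argument is essentially the paper's: existence and uniqueness via linear ODE theory (the paper solves piecewise on the intervals where $z,y$ are constant, you invoke Carath\'eodory or a weighted contraction, which is equivalent here), then the Lipschitz bound by subtracting the two integral equations, bounding the coefficient difference by $L\,\chi\{z_s\neq\tilde z_s\text{ or }y_s\neq\tilde y_s\}$, and applying Gronwall. Your closing caveat about the initial-condition term $v_{\theta\eta}(z_0,y_0,\cdot)-v_{\theta\eta}(\tilde z_0,\tilde y_0,\cdot)$ is well taken---the paper's own proof omits it too, so the stated estimate is really intended for inputs with matching initial values (or with $\|\cdot\|_t$ read as including a penalty at $s=0$).
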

\begin{proof}
By definition of $\mathcal{D}([0,T], \Gamma)$, both $y,z$ must be piecewise constant, with only a finite number of discontinuities. One then finds a unique solution to the ODE for $j_{\theta,\eta,t}$ along any interval over which both $y$ and $z$ are constant, and iteratively one finds a unique solution upto time $T$.

To see why \eqref{Lipschitz Bound} is true, write $\tilde{j}_{\theta\eta,t}(a) = \Psi_{\theta \eta,t}(  \tilde{z} ,\tilde{x} )$. Then one immediately finds that there is a universal constant $c$ such that
\begin{align}
\sup_{a\in \Gamma}\big| j_{\theta\eta,t}(a) - \tilde{j}_{\theta\eta,t}(a) \big| \leq c \int_0^t \sup_{a\in \Gamma}\big| j_{\theta\eta,s}(a) - \tilde{j}_{\theta\eta,s}(a) \big| ds + c \norm{z - \tilde{z}}_t + c \norm{x - \tilde{x}}_t
\end{align}
Gronwall's Inequality now implies
\begin{align}
\sup_{a\in \Gamma}\big| j_{\theta\eta,t}(a) - \tilde{j}_{\theta\eta,t}(a) \big| \leq c \exp(ct) \big(  \norm{z - \tilde{z}}_t + c \norm{x - \tilde{x}}_t \big).
\end{align}
\end{proof}
For $\theta \in \mathcal{E}$, we next define
\begin{align} \label{eq: psi definition}
\psi_{\theta,t}:  \mathcal{D}([0,t] , \Gamma) \times \mathcal{P}\big( \mathcal{E} \times \mathcal{D}([0,t] , \Gamma) \big) \mapsto \mathcal{Y}
\end{align}
to be such that for $a \in \Gamma_E$ and $\sigma \in \Gamma$,
\begin{align}
\psi_{\theta,t}(z,\mu)(a,\sigma) =   \mathbb{E}^{(\eta,y) \sim \mu}\big[  \Psi_{\theta\eta,t}(z , y)(a) \mathbf{1}\lbrace y_t = \sigma \rbrace \big]  .
\end{align}
To check that $\psi_{\theta,t}(z,\mu)$ is a well-defined probability measure, notice that
\begin{align}
\sum_{a \in \Gamma_E} \sum_{\sigma \in \Gamma}\psi_{\theta,t}(z,\mu)(a,\sigma) =&  \sum_{a \in \Gamma_E} \sum_{\sigma \in \Gamma} \mathbb{E}^{(\eta,y) \sim \mu}\big[  \Psi_{\theta\eta,t}(z , y)(a) \mathbf{1}\lbrace y_t = \sigma \rbrace \big]   \nonumber \\
=&\sum_{\sigma \in \Gamma} \mathbb{E}^{(\eta,y) \sim \mu}\big[ \mathbf{1}\lbrace y_t = \sigma \rbrace \big] \nonumber \\
=& 1
\end{align}
and in the second line we have used the fact that
\[
\sum_{a\in \Gamma_E} \Psi_{\theta\eta,t}(z , y)(a) = 1.
\]
 In the following Theorem we outline an implicit definition of the limiting probability law $\mu_T \in  \mathcal{P}\big(\mathcal{E} \times \mathcal{D}([0,T], \Gamma) \big)$.

\begin{theorem} \label{Theorem Existence of Limit Law}
Let $\lbrace y_{\theta, \alpha\mapsto\beta}(t) \rbrace_{\theta\in \mathcal{E} \fatsemi \alpha,\beta \in \Gamma}$ be independent unit intensity Poisson counting processes.  For any $T > 0$, there exists a unique set of $\Gamma$-valued stochastic processes $\lbrace z_{\theta}(t) \rbrace_{\theta\in \mathcal{E}, t\leq T}$ that satisfy the following properties. Let $\lbrace z_{\theta,0} \rbrace_{\theta\in \mathcal{E}}$ be independent $\Gamma$-valued variables, such that
\begin{align}
\mathbb{P}\big( z_{\theta}(0) = \alpha \big) = \sum_{\beta\in\Gamma} \sum_{a\in \Gamma_E} \int_{\mathcal{E}} v_{\theta\eta}(\alpha,\beta,a) d\mu_{\mathcal{E}}(\eta).
\end{align}
For $t > 0$, define $z_{\theta}(t) = \alpha \in \Gamma$ precisely when
\begin{multline}
\mathbf{1}\lbrace z_{\theta}(0) = \alpha \rbrace + \sum_{\beta \neq \alpha} \bigg\lbrace y_{\theta, \beta\mapsto\alpha}\bigg( \int_0^t \mathbf{1}\lbrace z_{\theta}(s) = \beta \rbrace f_{\beta\mapsto\alpha}\big( \psi_{\theta,s}(z_{\theta},\mu_s) \big) ds \bigg)\\
- y_{\theta, \alpha\mapsto\beta}\bigg( \int_0^t \mathbf{1}\lbrace z_{\theta}(s) = \alpha \rbrace f_{\alpha\mapsto\beta}\big( \psi_{\theta,s}(z_{\theta},\mu_s) \big) ds \bigg) \bigg\rbrace = 1,
\end{multline}
and in the above $\mu_s \in \mathcal{P}\big(\mathcal{E} \times \mathcal{D}([0,s], \Gamma) \big)$ is such that for any measurable $A \subset \mathcal{E}$ and any measurable $B \subset  \mathcal{D}([0,s], \Gamma)$,
\[
\mu_s(A \times B) = \int_A \mathbb{P}\big( z_{\theta}([0,s]) \in B \big) d\mu_{\mathcal{E}}(\theta).
\]

\end{theorem}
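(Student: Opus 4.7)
The plan is to realize $\{z_\theta\}_{\theta\in\mathcal{E}}$ as the fixed point of a map on $\mathcal{P}\bigl(\mathcal{E}\times\mathcal{D}([0,T],\Gamma)\bigr)$, using the Poisson processes $\{y_{\theta,\alpha\mapsto\beta}\}$ as common driving noise. Let $\mathcal{M}$ be the closed subset of laws whose $\mathcal{E}$-marginal is $\mu_{\mathcal{E}}$ and whose time-$0$ marginal matches the prescribed initial law. For each candidate $\mu\in\mathcal{M}$ and each $\theta\in\mathcal{E}$, I would first construct a pathwise unique cadlag process $z^{\mu}_\theta$ driven by $\{y_{\theta,\alpha\mapsto\beta}\}$ with instantaneous intensity $f_{\alpha\mapsto\beta}\bigl(\psi_{\theta,s}(z^{\mu}_\theta,\mu_s)\bigr)$. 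Since $\psi_{\theta,s}$ takes values in the compact set $\mathcal{Y}$ and $f$ is Lipschitz (hence bounded) on $\mathcal{Y}$, the intensities are uniformly bounded; a standard Poisson-thinning construction that builds the trajectory between successive candidate jump times then yields existence and pathwise uniqueness on $[0,T]$. Define $\Phi(\mu)$ to be the joint law of $(\theta,z^{\mu}_\theta)$ when $\theta\sim\mu_{\mathcal{E}}$; the theorem reduces to showing that $\Phi$ admits a unique fixed point in $\mathcal{M}$.

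Next I would run a Picard contraction in the distance $d_{W,T}$. Given $\mu,\tilde{\mu}\in\mathcal{M}$, couple $z^{\mu}_\theta$ and $z^{\tilde{\mu}}_\theta$ by driving both with the same Poisson processes and the same initial condition. Standard arguments for Poisson-driven jump processes then give, for each $\theta$,
\begin{align*}
\mathbb{E}\bigl[d_t\bigl(z^{\mu}_\theta,z^{\tilde{\mu}}_\theta\bigr)\bigr]\leq C\int_0^t\mathbb{E}\bigl\|\psi_{\theta,s}(z^{\mu}_\theta,\mu_s)-\psi_{\theta,s}(z^{\tilde{\mu}}_\theta,\tilde{\mu}_s)\bigr\|\,ds.
\end{align*}
The definition of $\psi_{\theta,s}$ as an expectation under $\mu$, combined with \eqref{Lipschitz Bound} applied along an optimal coupling witnessing $d_{W,s}(\mu,\tilde{\mu})$, yields
\[
\bigl\|\psi_{\theta,s}(z^{\mu}_\theta,\mu_s)-\psi_{\theta,s}(z^{\tilde{\mu}}_\theta,\tilde{\mu}_s)\bigr\|\leq C_T\bigl(d_s(z^{\mu}_\theta,z^{\tilde{\mu}}_\theta)+d_{W,s}(\mu,\tilde{\mu})\bigr).
\]
A Gronwall step absorbs the self-term and produces $\mathbb{E}[d_t(z^{\mu}_\theta,z^{\tilde{\mu}}_\theta)]\leq C'_T\int_0^t d_{W,s}(\mu,\tilde{\mu})\,ds$. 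Integrating over $\theta\sim\mu_{\mathcal{E}}$ and using the identity coupling on $\theta$ together with the Poisson coupling on $z$ as an admissible coupling of $\Phi(\mu)$ and $\Phi(\tilde{\mu})$ gives
\[
d_{W,t}\bigl(\Phi(\mu),\Phi(\tilde{\mu})\bigr)\leq C'_T\int_0^t d_{W,s}(\mu,\tilde{\mu})\,ds.
\]
Iterating this inequality $k$ times produces a Lipschitz constant $(C'_T T)^k/k!$ for $\Phi^k$, so $\Phi^k$ is a strict contraction for $k$ large enough. Banach's fixed point theorem delivers a unique $\mu^\ast\in\mathcal{M}$ with $\Phi(\mu^\ast)=\mu^\ast$, and $z_\theta:=z^{\mu^\ast}_\theta$ is the claimed process. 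Pathwise uniqueness among all candidate solutions follows by applying the same coupling estimate directly to two hypothetical solutions driven by the same Poisson noise.

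The principal obstacle is disentangling the two appearances of $z_\theta$ inside $\psi_{\theta,s}(z_\theta,\mu_s)$: the first argument uses the entire trajectory of $z_\theta$ up to time $s$ (through the edge dynamics $\Psi_{\theta\eta,s}$), while the second enters via the self-consistency requirement that $\mu_s$ is the law of $(\theta,z_\theta)$ restricted to $[0,s]$. The argument closes only because \eqref{Lipschitz Bound} controls \emph{both} contributions by the single composite quantity $d_s(z^{\mu}_\theta,z^{\tilde{\mu}}_\theta)+d_{W,s}(\mu,\tilde{\mu})$, so that a single Gronwall application eliminates the pathwise self-term before the outer Banach iteration on $\mu$ is carried out. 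A secondary technical point is to verify that $\psi_{\theta,s}(z,\mu)$ is genuinely Lipschitz in $\mu$ for the Wasserstein metric; this follows by writing the expectation defining $\psi$ against an arbitrary coupling of $\mu$ and $\tilde{\mu}$, applying \eqref{Lipschitz Bound} along each realization, and finally optimizing over couplings.
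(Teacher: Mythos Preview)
Your proposal is correct and takes essentially the same approach as the paper: define a map $\Phi$ on laws via the driven process, couple two candidates through their Poisson noise, combine the Lipschitz estimate \eqref{Lipschitz Bound} with Gronwall to control the distance between the coupled trajectories, and close with a Banach fixed-point argument. The only cosmetic differences are that the paper makes the coupling explicit via a minimum-plus-remainder decomposition of the intensities (three independent Poisson drivers $\bar{Y},\hat{Y},\breve{Y}$) rather than a single common time-change, and it obtains the contraction for small $t$ and then iterates in time, whereas you iterate $\Phi$ globally to produce the factor $(C'_T T)^k/k!$.
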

\begin{proof}
Let $\mathcal{A}_t \subset \mathcal{P}\big( \mathcal{E} \times \mathcal{D}([0,t],\Gamma) \big)$ consist of all probability measures $\mu$ such that, writing $\mu$ to be the law of $(\theta,z)$, (i) it holds that for any measurable set $B \subset \mathcal{E}$,
\[
\mu(\theta \in B) = \mu_{\mathcal{E}}(B).
\]
$\mu$ must also be such that (ii)
\begin{align}
\mu\big( \theta \in B , z_0 =\alpha \big) = \sum_{\beta\in\Gamma , a\in\Gamma_E} \int_{\mathcal{E}} v_{\theta\eta}(\alpha,\beta,a) d\mu_{\mathcal{E}}(\eta).
\end{align}
Finally it is required that (iii) we have the following uniform bound on the expected number of transitions: for all $\theta\in\mathcal{E}$,
\begin{align}
\mathbb{E}^{\mu}\big[ \big| \big\lbrace s\leq t : z(s^-) \neq z(s) \big\rbrace \big| \; \big| \; \theta  \big] \leq t \sup_{\alpha,\beta \in \Gamma} \sup_{y \in \mathcal{Y}}f_{\alpha\mapsto\beta}(y)
\end{align}
where we recall that $f_{\alpha\mapsto\beta}$ is defined in \eqref{eq: f transition function} and gives the transition intensities.

For some $t > 0$, let $\mu,\tilde{\mu} \in \mathcal{A}_t$ be any two probability measures. We are going to define a coupling of these two measures via inhomogeneous counting processes $\lbrace z_{\theta}(t) , \tilde{z}_{\theta}(t) \rbrace_{\theta\in \mathcal{E}}$. Write
\begin{align}
g_{\theta,s}=& \psi_{\theta,s}(z_{\theta}, \mu) \\
 \tilde{g}_{\theta,s} =&\psi_{\theta,s}( \tilde{z}_{\theta}, \tilde{\mu}) .
\end{align}
Now define
\begin{align}
u_{\beta \mapsto \alpha}(t , \theta) &=  f_{\beta \mapsto \alpha}(g_{\theta,t}) \mathbf{1}\lbrace z_{\theta}(t) = \beta \rbrace \\
\tilde{u}_{\beta \mapsto \alpha}(t , \theta) &=  f_{\beta \mapsto \alpha}(\tilde{g}_{\theta,t}) \mathbf{1}\lbrace \tilde{z}_{\theta}(t) = \beta \rbrace \\
\bar{u}_{\beta\mapsto \alpha}(t,\theta) &= \inf\big\lbrace u_{\beta \mapsto \alpha}(t , \theta)  , \tilde{u}_{\beta \mapsto \alpha}(t , \theta)  \big\rbrace \\
\hat{u}_{\beta\mapsto\alpha}(t,\theta) &= u_{\beta \mapsto \alpha}(t , \theta) - \bar{u}_{\beta\mapsto \alpha}(t,\theta) \\
\breve{u}_{\beta\mapsto\alpha}(t,\theta) &= u_{\beta \mapsto \alpha}(t , \theta) - \bar{u}_{\beta\mapsto \alpha}(t,\theta) .
\end{align}
For independent unit-intensity Poisson counting processes $\big\lbrace \bar{Y}_{\beta\mapsto\alpha , \theta}(t) ,  \hat{Y}_{\beta\mapsto\alpha , \theta}(t) ,  \breve{Y}_{\beta\mapsto\alpha, \theta}(t) \big\rbrace_{\theta\in\mathcal{E}}$, we define
\begin{align}
\bar{Z}_{\beta\mapsto\alpha , \theta}(t) &= \bar{Y}_{\beta\mapsto\alpha , \theta}\bigg( \int_0^t  \bar{u}_{\beta\mapsto \alpha}(s,\theta) ds \bigg) \\
\hat{Z}_{\beta\mapsto\alpha , \theta}(t) &= \hat{Y}_{\beta\mapsto\alpha , \theta}\bigg( \int_0^t  \hat{u}_{\beta\mapsto \alpha}(s,\theta) ds \bigg) \\
\breve{Z}_{\beta\mapsto\alpha , \theta}(t) &= \breve{Y}_{\beta\mapsto\alpha , \theta}\bigg( \int_0^t  \breve{u}_{\beta\mapsto \alpha}(s,\theta) ds \bigg) \\
Z_{\beta\mapsto\alpha,\theta}(t) &= \bar{Z}_{\beta\mapsto\alpha , \theta}(t) + \hat{Z}_{\beta\mapsto\alpha , \theta}(t) \\
\tilde{Z}_{\beta\mapsto\alpha,\theta}(t) &= \bar{Z}_{\beta\mapsto\alpha , \theta}(t) + \breve{Z}_{\beta\mapsto\alpha , \theta}(t) .
\end{align}
Finally it is stipulated that $z_{\theta}(t) = \beta$ if and only if 
\begin{align}
\mathbf{1}\lbrace z_{\theta}(0) = \beta \rbrace + \sum_{\alpha\neq \beta} \big( \bar{Z}_{\alpha\mapsto\beta , \theta}(t) - \bar{Z}_{\beta\mapsto\alpha , \theta}(t) \big) = 1.
\end{align}
This is well-defined because the LHS of the above equals the number of transitions to $\beta$ upto time $t$, minus the number of transitions away, plus one if the initial value is equal to $\beta$. We similarly stipulate that $\tilde{z}_{\theta}(t) = \beta$ if and only if 
\begin{align}
\mathbf{1}\lbrace z_{\theta}(0) = \beta \rbrace + \sum_{\alpha\neq \beta} \big( \bar{Z}_{\alpha\mapsto\beta , \theta}(t) - \bar{Z}_{\beta\mapsto\alpha , \theta}(t) \big) = 1.
\end{align}
Write $\mu^{(1)}_t , \tilde{\mu}^{(1)}_t \in \mathcal{P}\big(\mathcal{E} \times \mathcal{D}([0,t],\Gamma) \big)$ to be the respective probability laws. That is, for any measurable $A \subseteq \mathcal{E}$, and any measurable $B \subseteq \mathcal{D}([0,t],\Gamma)$,
\begin{align*}
\mu^{(1)}_t(A \times B) =& \int_{A} \mathbb{P}\big( z_{\theta} \in B \big) d\mu_{\mathcal{E}}(\theta) \\
\tilde{\mu}^{(1)}_t(A \times B) =& \int_{A} \mathbb{P}\big( \tilde{z}_{\theta} \in B\big) d\mu_{\mathcal{E}}(\theta) .
\end{align*}
We next claim that there exists a constant $C > 0$ such that
\begin{align}
 \int_{\mathcal{E}} \mathbb{E}\big[ d_t( z_{\theta} , \tilde{z}_{\theta}) \big] \leq Ct d_W(\mu_t, \tilde{\mu}_t)  . \label{eq: to prove fixed point identity }
\end{align}
With an aim of proving \eqref{eq: to prove fixed point identity }, it follows from the definitions that there exists a constant $c > 0$ such that for all $t\leq T$,
\begin{align}
\int_{\mathcal{E}} \mathbb{E}\big[ \norm{ g_{\theta,t} - \tilde{g}_{\theta,t} } \big] d\theta \leq c\int_{\mathcal{E}} \mathbb{E}\big[ \norm{ z_{\theta} - \tilde{z}_{\theta} }_t \big] d\theta + c d_W(\mu_t, \tilde{\mu}_t).
\end{align}
Substituting the definitions, and employing Gronwall's Inequality, we then find that there is a constant $C > 0$ such that for all $t \leq T$,
\begin{align}
\sup_{\alpha\neq \beta} \int_{\mathcal{E}} \mathbb{E}\big[\hat{Z}_{\beta\mapsto\alpha , \theta}(t) + \breve{Z}_{\beta\mapsto\alpha , \theta}(t) \big] d\theta & \leq C\int_0^t \int_{\mathcal{E}} \mathbb{E}\big[ \mathbf{1}\lbrace z_{\theta}(s) \neq \tilde{z}_{\theta}(s) \rbrace  \big] d\theta ds + C d_W(\mu_t, \tilde{\mu}_t) \nonumber \\
=&C \int_{\mathcal{E}} \mathbb{E}\big[  d_t\big(  z_{\theta}, \tilde{z}_{\theta} \big) \big] d\theta  + C d_W(\mu_t, \tilde{\mu}_t) 
\end{align}
Notice finally that
\begin{align}
 \int_{\mathcal{E}} \mathbb{E}\big[  d_t\big( z_{\theta} , \tilde{z}_{\theta} \big) \big] d\theta 
\leq |\Gamma|^2 \sup_{\alpha\neq \beta} \int_{\mathcal{E}} \mathbb{E}\big[\hat{Z}_{\beta\mapsto\alpha , \theta}(t) + \breve{Z}_{\beta\mapsto\alpha , \theta}(t) \big] d\theta 
\end{align}
We can thus conclude that \eqref{eq: to prove fixed point identity } holds.  \eqref{eq: to prove fixed point identity } implies that
\begin{align}
d_{W}\big( \mu^{(1)}_t , \tilde{\mu}^{(1)}_t \big) \leq tC d_W(\mu_t, \tilde{\mu}_t).
\end{align}
The fixed point theorem then implies that for small enough $t$, there is a unique $\mu_t$ such that $\mu^{(1)}_t = \mu_t$.

We can then iterate this method for larger and larger $t$, obtaining a unique fixed point upto time $T$.
\end{proof}

\subsection{$n$-dimensional Approximation}

We now define an $n$-dimensional system of Hawkes Processes $\lbrace \tilde{\sigma}^j(t) \rbrace_{j\in I_n}$ that approximates the original system, except that the intensity of the flipping of the node-variables  $ \tilde{\sigma}^j(t) $ is independent of $\tilde{\sigma}^k$ if $k \neq j$, with the same intensity function as the limiting law $\mu$ defined in Theorem \ref{Theorem Existence of Limit Law}. This intermediate approximation will serve as a bridge between the original $n$-dimensional system and the final limiting law. To this end, let $\big\lbrace \tilde{\sigma}^j_t \big\rbrace_{j\in I_n}$ be independent jump-Markov Processes such that (i) $\tilde{\sigma}^j_0 = \sigma^j_0$ , (ii) $\tilde{J}^{jk}_0 = J^{jk}_0$, and (iii) for $b\neq a$, if $\tilde{\sigma}_t^j = a$ then 
 \begin{align}
  \mathbb{P}\big(  \tilde{\sigma}_{t+h}^{j} = \beta \; \big| \; \mathcal{F}_t \big) =h f_{\alpha \mapsto \beta}(  \hat{G}^j_{t} ) + O(h^2)
 \end{align}
 where
 \begin{equation}
\hat{G}^j_{t} = \psi_{\theta^j_n,t}( \tilde{\sigma}^j ,\mu_t)
 \end{equation}
and $\mu_t \in \mathcal{P}\big(\mathcal{E} \times \mathcal{D}([0,t], \Gamma) \big)$ is defined in Theorem \ref{Theorem Existence of Limit Law}.

 The transitions of the connectivities $\lbrace \tilde{J}^{jk}_t \rbrace_{j,k \in I_n}$ are such that for $h\ll 1$ and $\alpha \in \Gamma_E$, if $\tilde{J}_{t}^{j  k} = b$ then
 \begin{align}
 \mathbb{P}\big(  \tilde{J}_{t+h}^{j k} = a \; \big| \; \mathcal{F}_t \big) = h l_{ b \mapsto a} \big( \tilde{\sigma}^j_t ,  \tilde{\sigma}^{k}_t \big) + O(h^2) .
 \end{align}
 Define the associated empirical measure
 \begin{align}
 \tilde{\mu}^n_t = n^{-1} \sum_{j\in I_n} \delta_{\theta^j_n ,  \tilde{\sigma}^j_{[0,t]}} \in \mathcal{P}\big( \mathcal{E} \times \mathcal{D}([0,t] , \Gamma) \big).
 \end{align}
Define also, $t\leq T$, $\alpha \in \Gamma$ and $a \in \Gamma_E$, 
\begin{align}
\tilde{G}^j_t(\alpha,a) =&  n^{-1}  \sum_{i \in I_n} \mathbf{1}\big\lbrace \tilde{\sigma}^i_t = \alpha , \tilde{J}^{ji}_t = a \big\rbrace 
\end{align}
 Lets first notice that the empirical measure generated by this system converges to the same limiting law.
 \begin{lemma} \label{Lemma convergence of tilde mu n t}
 For any $\epsilon > 0$, define the event
 \begin{align}
\mathcal{U}^n_{\epsilon} = \bigg\lbrace  d_W\big( \tilde{\mu}^n_{T} , \mu_T \big) \leq \epsilon \bigg\rbrace  .
 \end{align}
 Then for any $\epsilon > 0$,
 \begin{align}
 \lsup{n} n^{-1}\log \mathbb{P}\big(  (\mathcal{U}^n_{\epsilon})^c \big) < 0. \label{eq: to show u n c epsilon}
 \end{align}
Furthermore, for any $\epsilon > 0$, 
 \begin{align}
 \lsup{n} n^{-1} \log \mathbb{P}\bigg(  \sup_{j\in I_n} \sup_{t\leq T} \sup_{a\in \Gamma_E , \alpha\in\Gamma} \big| \tilde{G}^{j}_{t}(\alpha,a)- \hat{G}^j_{t}(\alpha,a) \big| > \epsilon \bigg) < 0. \label{eq: rate of convergence of G}
 \end{align}
 \end{lemma}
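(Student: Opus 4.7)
The structural fact driving the argument is that the processes $\{\tilde{\sigma}^j\}_{j\in I_n}$ are \emph{independent}: their jump intensities $f_{\alpha\mapsto\beta}(\hat{G}^j_t)$ depend only on the trajectory of $\tilde{\sigma}^j$, the fixed position $\theta^j_n$, and the \emph{deterministic} law $\mu_t$. Moreover the conditional law of $\tilde{\sigma}^j$ given $\theta^j_n=\theta$ coincides with that of $z_\theta$ from Theorem~\ref{Theorem Existence of Limit Law}. Uniform boundedness of $f_{\alpha\mapsto\beta}$ on $\mathcal{Y}$ also yields, via a Poisson tail bound, an event of probability $1-\exp(-cn)$ on which every $\tilde{\sigma}^j$ lies in the compact set $\mathcal{Z}_{T,a}$ for some fixed $a$; I work on this event throughout.

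To prove \eqref{eq: to show u n c epsilon}, I would test $d_W(\tilde{\mu}^n_T,\mu_T)$ against a finite $\epsilon$-net of $1$-Lipschitz functions on the compact metric space $\mathcal{E}\times\mathcal{Z}_{T,a}$, which exists by Arzel\`a--Ascoli. For each such $\phi$ in the net, decompose
\[
n^{-1}\sum_j \phi(\theta^j_n,\tilde{\sigma}^j) - \int \phi\,d\mu_T \,=\, n^{-1}\!\sum_j \bigl(\phi - \mathbb{E}\phi\bigr)(\theta^j_n,\tilde{\sigma}^j) + \biggl(n^{-1}\!\sum_j F(\theta^j_n) - \int F\,d\mu_\mathcal{E}\biggr),
\]
where $F(\theta):=\mathbb{E}[\phi(\theta,z_\theta)]$ is continuous. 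The first term is a sum of independent bounded random variables, so Hoeffding gives probability at most $2e^{-cn\epsilon^2}$ of exceeding $\epsilon$; the second is deterministic and vanishes by Hypothesis~1. A union bound over the $\epsilon$-net preserves the exponential rate.

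To prove \eqref{eq: rate of convergence of G}, fix $j,t,\alpha,a$ and decompose $\tilde{G}^j_t(\alpha,a) - \hat{G}^j_t(\alpha,a) = \mathrm{(I)} + \mathrm{(II)}$ with
\begin{align*}
\mathrm{(I)} &= n^{-1}\sum_i \chi\{\tilde{\sigma}^i_t=\alpha\}\bigl(\chi\{\tilde{J}^{ji}_t=a\} - \Psi_{\theta^j_n\theta^i_n,t}(\tilde{\sigma}^j,\tilde{\sigma}^i)(a)\bigr), \\
\mathrm{(II)} &= n^{-1}\sum_i \chi\{\tilde{\sigma}^i_t=\alpha\}\Psi_{\theta^j_n\theta^i_n,t}(\tilde{\sigma}^j,\tilde{\sigma}^i)(a) - \hat{G}^j_t(\alpha,a).
\end{align*}
Conditional on the full collection of node trajectories, the edges $\{\tilde{J}^{ji}\}_{i\in I_n}$ are independent inhomogeneous jump-Markov chains with instantaneous occupation probabilities $\Psi_{\theta^j_n\theta^i_n,t}(\tilde{\sigma}^j,\tilde{\sigma}^i)$ by the forward Kolmogorov equation; hence $\mathrm{(I)}$ is a centered, bounded conditional sum to which conditional Hoeffding gives $e^{-cn\epsilon^2}$. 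For $\mathrm{(II)}$, the summands are independent in $i$ conditional on $\tilde{\sigma}^j$, bounded in $[0,1]$, with conditional mean $n^{-1}\sum_i H(\theta^i_n)$ for the continuous function $H(\eta):=\mathbb{E}[\chi\{z_\eta(t)=\alpha\}\Psi_{\theta^j_n\eta,t}(\tilde{\sigma}^j,z_\eta)(a)]$; Hypothesis~1 identifies the limit with $\hat{G}^j_t(\alpha,a)$, and Hoeffding once more controls the fluctuation.

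The main obstacle is the \emph{uniformity} over $j\in I_n$ and $t\in[0,T]$. Finiteness of $\Gamma,\Gamma_E$ makes the suprema over $(\alpha,a)$ trivial. For $j$, the test functions appearing in $\mathrm{(II)}$ are parametrized by $(\theta^j_n,\tilde{\sigma}^j)\in\mathcal{E}\times\mathcal{Z}_{T,a}$, which is compact, and by \eqref{Lipschitz Bound} a fixed $\epsilon$-net (independent of $n$) suffices; the union bound over $j$ costs only a factor $n$, negligible against the exponential rate. For $t$, both $\tilde{G}^j_t$ and $\hat{G}^j_t$ change only at jumps of the $(\tilde{\sigma}^i,\tilde{J}^{ji})_i$ (of which there are $O(n)$ with overwhelming probability), each causing a change of size $O(1/n)$; a time grid of mesh $1/n$ captures the supremum up to error $o(\epsilon)$, and a further polynomial union bound preserves the exponential rate.
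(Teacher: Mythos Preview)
For \eqref{eq: to show u n c epsilon} your argument is correct and takes a more elementary route than the paper: you exploit compactness of $\mathcal{E}\times\mathcal{Z}_{T,a}$, a finite $\epsilon$-net of $1$-Lipschitz test functions, and Hoeffding, whereas the paper simply invokes a Sanov-type large deviation principle for the empirical measure of independent (non-identically distributed) variables, citing Comets. Both work; yours is self-contained, the paper's gives the full rate function for free. For \eqref{eq: rate of convergence of G} the overall architecture --- conditional concentration of the edges given the nodes, then a time grid of mesh $T/n$ with a jump-counting bound between grid points --- matches the paper's.

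There is, however, a genuine gap in your term $\mathrm{(I)}$. The conditional occupation probability of $\tilde{J}^{ji}_t$ given the node trajectories is \emph{not} $\Psi_{\theta^j_n\theta^i_n,t}(\tilde{\sigma}^j,\tilde{\sigma}^i)(a)$: the map $\Psi_{\theta\eta}$ is defined so that its time-zero value is the \emph{limiting} density $v_{\theta\eta}(z_0,y_0,\cdot)$, whereas the actual edge starts from the deterministic constant $\tilde{J}^{ji}_0 = J^{ji}_0$. The correct centering in $\mathrm{(I)}$ is the solution $p^{ji}_t$ of the same forward equation with initial condition $\delta_{J^{ji}_0}$ (this is what the paper writes down). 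You then need an additional term
\[
\mathrm{(I')} \;=\; n^{-1}\sum_{i} \chi\{\tilde{\sigma}^i_t=\alpha\}\bigl(p^{ji}_t(a) - \Psi_{\theta^j_n\theta^i_n,t}(\tilde{\sigma}^j,\tilde{\sigma}^i)(a)\bigr),
\]
which is deterministic once the nodes are fixed and whose smallness requires Hypothesis~3 on the convergence of the empirical distribution of $(\theta^j_n,\theta^k_n,\sigma^j_0,\sigma^k_0,J^{jk}_0)$; note that Hypothesis~3 is stated only for the \emph{double} empirical measure, so obtaining the $\sup_{j\in I_n}$ uniformity here needs some care. (In fairness, the paper's own proof elides exactly this point: its displayed identity $\tilde{G}^j_t-\hat{G}^j_t = n^{-1}\sum_k\chi\{\tilde{\sigma}^k_t=\alpha\}\hat{J}^{jk}_t(a)$ tacitly conflates $\hat{G}^j_t$ with $n^{-1}\sum_k\chi\{\tilde{\sigma}^k_t=\alpha\}p^{jk}_t(a)$.) One minor further inaccuracy: $\hat{G}^j_t=\psi_{\theta^j_n,t}(\tilde{\sigma}^j,\mu_t)$ is not piecewise constant in $t$ --- it varies continuously through the ODE defining $\Psi$ and through $\mu_t$ --- but it is uniformly Lipschitz in $t$, so your time-grid argument still goes through.
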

 \begin{proof}
 The variables $\lbrace \tilde{\sigma}^j \rbrace_{j\in I_n}$ are independent, and the probability law of $\tilde{\sigma}^j$ depends continuously on $\theta^j_n$. It is known that the law of $\tilde{\mu}^n_{t} $ satisfies a Large Deviation Principle \cite{Comets1989,dembo2009large}, with rate function $\mathcal{I}: \mathcal{P}\big( \mathcal{D} \times  \mathcal{D}( [0,T] , \Gamma ) \big) \mapsto \mathbb{R}$, defined as follows. Suppose first that $\nu$ is such that for any measurable $A \subset \mathcal{E}$ and any measurable $B \subset  \mathcal{D}([0,T], \Gamma)$, there exists a measurable map $\theta \mapsto \nu_{\theta} \in  \mathcal{P}\big(   \mathcal{D}( [0,T] , \Gamma ) \big) $ such that 
\[
\nu(A \times B) = \int_A  \nu_{\theta}(B) d\mu_{\mathcal{E}}(\theta).
\]
In this case, the rate function is such that
 \begin{align}
 \mathcal{I}(\nu) = \int_{\mathcal{E}} \mathcal{R}( \nu_\theta || \mu_{\theta} ) d\mu_{\mathcal{E}}(\theta).
 \end{align}
 If $\nu$ does not admit this decomposition, then $\mathcal{I}(\nu) = \infty$. See for instance \cite{Comets1989} for a proof.
 
 Since $\mathcal{I}$ has a unique zero at $\mu$, we may conclude that \eqref{eq: to show u n c epsilon} holds. It remains to prove \eqref{eq: rate of convergence of G}. Define $p^{jk}_t \in \mathcal{Y}$ to be such that $p^{jk}_0( J^{jk}_0) = 1$ and $p^{jk}_0(a) = 0$ for $a \neq J^{jk}_0$, and for all $a \in \Gamma_E$,
 \begin{align}
p^{jk}_t(a) =p^{jk}_0(a)+ \sum_{b \in \Gamma_E : b \neq a} \int_0^t \bigg\lbrace p^{jk}_t(b) l_{b\mapsto a}(\tilde{\sigma}^j_s , \tilde{\sigma}^k_s)   - p^{jk}_t(a)    l_{a\mapsto b}(\tilde{\sigma}^j_s , \tilde{\sigma}^k_s)  \bigg\rbrace ds .
\end{align}
 Notice also that, since the evolution of $\lbrace \tilde{\sigma}^l \rbrace_{l\in I_n}$ is independent of the evolution of  $\lbrace \tilde{J}^{jk}_t \rbrace_{j,k\in I_n}$, for $a\in \Gamma_E$,
 \begin{align}
 p^{jk}_t(a) = \mathbb{P}\big( \tilde{J}^{jk}_t = a \; \big| \; \lbrace \tilde{\sigma}^l \rbrace_{l\in I_n} \big).
 \end{align}
 Substituting definitions, we find that
 \begin{align}
 \tilde{G}^{j}_{t}(\alpha,a)- \hat{G}^j_{t}(\alpha,a) &= n^{-1}\sum_{k\in I_n} \mathbf{1}\lbrace \tilde{\sigma}^k_t = \alpha \rbrace  \hat{J}^{jk}_t(a)  \text{ where }\\
 \hat{J}^{jk}_t(a) &= \mathbf{1}\lbrace \tilde{J}^{jk}_t = a \rbrace -  \mathbb{P}\big( \tilde{J}^{jk}_t = a \; \big| \; \lbrace \tilde{\sigma}^l \rbrace_{l\in I_n} \big).
 \end{align}
  Now
 \begin{multline}
 \mathbb{P}\bigg(  \sup_{j\in I_n} \sup_{t\leq T} \sup_{a\in \Gamma_E , \alpha\in\Gamma} \big| \tilde{G}^{j}_{t}(\alpha,a)- \hat{G}^j_{t}(\alpha,a) \big| > \epsilon \bigg) 
 \leq n^2 \sup_{j,p\in I_n}\bigg\lbrace \mathbb{P}\bigg(   \mathcal{V}^{n,j}_{\epsilon}\big( T(p-1) / n\big) \bigg)  \nonumber\\ +|\Gamma| |\Gamma_E|  \sup_{a\in \Gamma_E , \alpha\in\Gamma} \mathbb{P}\bigg( \sup_{ (p-1)T/n \leq t \leq pT/n} \big| \tilde{G}^{j}_{t}(\alpha,a)- \hat{G}^j_{t}(\alpha,a) \big| \geq \epsilon/2  \bigg) \bigg\rbrace
 \end{multline}
and we have defined the event, for $j \in I_n$,
 \begin{align}
 \mathcal{V}^{n,j}_{\epsilon}(t) = \bigg\lbrace \sup_{\alpha\in\Gamma , a\in\Gamma_E} \big| \tilde{G}^{j}_{t}(\alpha,a)- \hat{G}^j_{t}(\alpha,a) \big| \geq \epsilon / 2 \bigg\rbrace .
 \end{align}
 Employing a Chernoff Bound, for a constant $\beta > $,
 \begin{align}
 \mathbb{P}\big( \mathcal{V}^{n,j}_{\epsilon}(t) \big) \leq &\sum_{\alpha\in\Gamma , a\in \Gamma_E}\mathbb{E}\bigg[ \exp\bigg( \beta \sum_{k\in I_n} \mathbf{1}\lbrace \tilde{\sigma}^k_t = \alpha \rbrace  \hat{J}^{jk}_t(a) \bigg) \nonumber \\ &+  \exp\bigg( -\beta \sum_{k\in I_n} \mathbf{1}\lbrace \tilde{\sigma}^k_t = \alpha \rbrace  \hat{J}^{jk}_t(a) \bigg)\bigg] \exp\big( - n \beta \epsilon / 2 \big) \nonumber \\
 \leq &2|\Gamma| |\Gamma_E| \exp\big( C_T n \beta^2 - n \beta \epsilon / 2  \big)
 \end{align}
 for some constant $C_T$ that is independent of $n$ and $\beta$ (as long as $\beta$ is sufficiently small). Substituting $\beta = \epsilon / (4C_T)$, we obtain that
  \begin{align}
 \mathbb{P}\big( \mathcal{V}^{n,j}_{\epsilon}(t) \big) \leq 2|\Gamma| |\Gamma_E| \exp\big( -n \epsilon^2  / (8C_T) \big)
 \end{align}
 Observe that
 \begin{multline}
  \sup_{ (p-1)T/n \leq t \leq pT/n} \big| \tilde{G}^{j}_{t}(\alpha,a)- \hat{G}^j_{t}(\alpha,a) \big| \leq \\ n^{-1} \sum_{k\in I_n} \mathbf{1}\bigg\lbrace \tilde{J}^{jk}_t \neq \tilde{J}^{jk}_{(p-1)T/n} \text{ for some }t \in \big[(p-1)T/n , pT/n \big] \bigg\rbrace
 \end{multline}
For a constant $\beta > 0$,
 \begin{multline*}
 \mathbb{P}\bigg( \sup_{ (p-1)T/n \leq t \leq pT/n} \big| \tilde{G}^{j}_{t}(\alpha,a)- \hat{G}^j_{t}(\alpha,a) \big| \geq \epsilon/2  \bigg) \leq \\ \mathbb{E}\bigg[ \exp\bigg( \beta \sum_{k\in I_n} \mathbf{1}\bigg\lbrace \tilde{J}^{jk}_t \neq \tilde{J}^{jk}_{(p-1)T/n} \text{ for some }t \in \big[(p-1)T/n , pT/n \big] \bigg\rbrace \bigg)\bigg] \exp\big( -\beta \epsilon n / 2 \big)
 \end{multline*}
  Now the probability that $\tilde{J}^{jk}_t \neq \tilde{J}^{jk}_{(p-1)T/n}$ for some $t \in [(p-1)T/n , pT/n]$ scales as $cT/n$ for a universal constant $c$. We thus find that
 \begin{align*}
 \mathbb{P}\bigg( \sup_{ (p-1)T/n \leq t \leq pT/n} \big| \tilde{G}^{j}_{t}(\alpha,a)- \hat{G}^j_{t}(\alpha,a) \big| \geq \epsilon/2  \bigg) &\leq 
 \bigg( 1 + cT/n \big( \exp(\beta) - 1 \big) \bigg)^n \exp\big( - \beta \epsilon n / 2 \big) \\
 &\leq \exp\bigg(   cT \big( \exp(\beta)-1 \big) - \beta \epsilon n / 2 \bigg).
 \end{align*}
 Choosing $\beta = \log n$, we find that for large enough $n$,
  \begin{align*}
\sup_{p,j\in I_n} \mathbb{P}\bigg( \sup_{ (p-1)T/n \leq t \leq pT/n} \big| \tilde{G}^{j}_{t}(\alpha,a)- \hat{G}^j_{t}(\alpha,a) \big| \geq \epsilon/2  \bigg) \leq \exp\big( - n \big)
 \end{align*}
 Combining the above estimates, we find that
 \begin{align*}
\lsup{n} n^{-1}\log  \mathbb{P}\bigg(  \sup_{j\in I_n} \sup_{t\leq T} \sup_{a\in \Gamma_E , \alpha\in\Gamma} \big| \tilde{G}^{j}_{t}(\alpha,a)- \hat{G}^j_{t}(\alpha,a) \big| > \epsilon \bigg) < 0,
 \end{align*}
 as required.
  \end{proof}

 \subsection{Coupling of Systems}
 
 Next, we outline a coupling of the $n$-dimensional systems in the same space (this roughly means that the probability laws of $\lbrace \sigma^j_t \rbrace_{j\in I_n}$ and $\lbrace \tilde{\sigma}^j_t \rbrace_{j\in I_n}$ remain the same, but the system is constructed to maximize the correlations as much as possible). To do this, we will employ the time-rescaling formalism that has been extensively employed by Kurtz \cite{Ethier1986} and Anderson \cite{Anderson2015} (amongst others).  
 
To this end, let   
  \begin{align}
    \big\lbrace \grave{Y}_{\alpha\mapsto \beta}^j(\cdot) , \bar{Y}_{\alpha\mapsto \beta}^j(\cdot)  , \breve{Y}^j_{\alpha\mapsto\beta}(\cdot) \big\rbrace_{\alpha,\beta\in\Gamma \fatsemi \alpha \neq \beta \fatsemi j\in I_n } \subset \mathcal{D}\big( [0,\infty) , \mathbb{Z}^+ \big) \label{eq: counting one}
 \end{align}
be independent unit intensity Poisson Processes. For the edge dynamics, we analogously let
 \begin{align}
  \big\lbrace \grave{Y}_{a \mapsto b}^{jk}(\cdot ) , \bar{Y}_{a \mapsto b}^{jk}(\cdot)  , \breve{Y}^{jk}_{a\mapsto b}(\cdot)\big\rbrace_{a,b \in \Gamma_E \fatsemi a \neq b \fatsemi j,k\in I_n} \label{eq: counting two}
 \end{align}
  be another set of mutually independent unit intensity Poisson Processes. We are going to define the stochastic processes $\lbrace \sigma^j_t , \tilde{\sigma}^j_t , \hat{G}^j_{\zeta,t} \rbrace_{j\in I_n , \zeta \in \mathcal{E}}$ to be functions of the above processes. To this end, we first note that the initial conditions are as previously specified, i.e. $\tilde{\sigma}^j_0 = \sigma^j_0$, $\tilde{J}^{jk}_0 = J^{jk}_0$. As previously, we define 
 \begin{equation}
\hat{G}^j_{t} = \psi_{\theta^j_n,t}( \tilde{\sigma}^j ,\mu_t)
 \end{equation}
We next define $\big\lbrace Z^j_{\alpha\mapsto \beta}(t) , \tilde{Z}^j_{\alpha\mapsto \beta}(t) , \grave{Z}^{j}_{\alpha\mapsto\beta} ( t ) , \breve{Z}^{j}_{\alpha\mapsto\beta} ( t ) \big\rbrace_{j\in I_n}$ to be counting processes, and such that 
 \begin{align}
Z^j_{\alpha\mapsto \beta}(t) =& \grave{Z}^{j}_{\alpha\mapsto\beta} ( t ) + \bar{Y}^{j}_{\alpha\mapsto\beta} \bigg( \int_0^t \bar{f}^j_{\alpha\mapsto\beta}(s) ds  \bigg) \\
\tilde{Z}^j_{\alpha\mapsto \beta}(t) =&  \breve{Z}^{j}_{\alpha\mapsto\beta} (t)+ \bar{Y}^{j}_{\alpha\mapsto\beta} \bigg( \int_0^t \bar{f}^j_{\alpha\mapsto\beta}(s) ds  \bigg) \\
\grave{Z}^{j}_{\alpha\mapsto\beta} ( t ) =& \grave{Y}^{j}_{\alpha\mapsto\beta} \bigg( \int_0^t \grave{f}^j_{\alpha\mapsto\beta}(s) ds  \bigg) \\
\breve{Z}^{j}_{\alpha\mapsto\beta} ( t ) =& \breve{Y}^{j}_{\alpha\mapsto\beta} \bigg( \int_0^t \breve{f}^j_{\alpha\mapsto\beta}(s) ds  \bigg) \\
 \bar{f}^j_{\alpha \mapsto \beta}(t) =& \inf\big\lbrace f_{\alpha\mapsto\beta}(G^j_t) \mathbf{1}\lbrace \sigma^j_t = \alpha \rbrace  , f_{\alpha\mapsto\beta}(\hat{G}^j_t  ) \mathbf{1}\lbrace \tilde{\sigma}^j_t = \alpha \rbrace  \big\rbrace \\
\grave{f}^j_{\alpha\mapsto \beta}(t) =&  f_{\alpha\mapsto\beta}(G^j_t) \mathbf{1}\lbrace \sigma^j_t = \alpha \rbrace   -  \bar{f}^j_{\alpha \mapsto \beta}(t) \\
\breve{f}^j_{\alpha\mapsto \beta}(t) =&  f_{\alpha\mapsto\beta}(\hat{G}^j_t ) \mathbf{1}\lbrace \tilde{\sigma}^j_t = \alpha \rbrace  -  \bar{f}^j_{\alpha \mapsto \beta}(t) .
 \end{align}
The edge variables are coupled in an analogous manner to the node variables.
  \begin{align}
Z^{jk}_{a\mapsto b}(t) =& \grave{Z}^{jk}_{a\mapsto b}(t) + \bar{Y}^{jk}_{a\mapsto b} \bigg( \int_0^t \bar{l}^j_{\alpha\mapsto\beta}(s) ds  \bigg) \\
\tilde{Z}^{jk}_{a\mapsto b}(t) =& \breve{Z}^{jk}_{a\mapsto b}(t)+ \bar{Y}^{jk}_{a\mapsto b} \bigg( \int_0^t \bar{l}^j_{\alpha\mapsto\beta}(s) ds  \bigg) \\
\grave{Z}^{jk}_{a\mapsto b}(t) =&  \grave{Y}^{jk}_{a\mapsto b} \bigg( \int_0^t \grave{l}^{jk}_{a\mapsto b}(s) ds  \bigg) \\
\breve{Z}^{jk}_{a\mapsto b}(t) =&  \breve{Y}^{jk}_{a\mapsto b} \bigg( \int_0^t \breve{l}^{jk}_{a\mapsto b}(s) ds  \bigg) \\
 \bar{l}^{jk}_{a \mapsto b }(t) =& \inf\big\lbrace l_{a \mapsto b}(\sigma^j_t , \sigma^k_t) \mathbf{1}\lbrace J^{jk}_t = a \rbrace , l_{a\mapsto b}(\tilde{\sigma}^j_t , \tilde{\sigma}^k_t) \mathbf{1}\lbrace \tilde{J}^{jk}_t = a \rbrace  \big\rbrace \\
\grave{l}^{jk}_{a \mapsto b}(t) =& l_{a \mapsto b}(\sigma^j_t , \sigma^k_t) \mathbf{1}\lbrace J^{jk}_t = a \rbrace   -  \bar{l}^{jk}_{a \mapsto b }(t)  \\
\breve{l}^{jk}_{a \mapsto b}(t) =& l_{a \mapsto b}(\tilde{\sigma}^j_t , \tilde{\sigma}^k_t) \mathbf{1}\lbrace \tilde{J}^{jk}_t = a \rbrace  -   \bar{l}^{jk}_{a \mapsto b }(t) .
 \end{align}
 We then stipulate that, for any $\alpha \in \Gamma$,
 \begin{align}
  \sigma^j_t &=  \alpha \text{ if and only if } \\
 \gamma^j_t(\alpha) &:=  \mathbf{1}\lbrace \sigma^j_0 = \alpha \rbrace + \sum_{\beta \neq \alpha} \big( Z^j_{\beta \mapsto \alpha}(t) - Z^j_{\alpha\mapsto \beta}(t) \big) = 1.
  \end{align}
  Note that only one of $\lbrace \gamma^j_t(\alpha) \rbrace_{\alpha\in \Gamma}$ can equal one, and the rest must be zero. This is because $\gamma^j_t(\alpha)$ counts the net number of transitions to the state $\alpha$ minus the number of transitions away, plus $1$ if $\sigma^j_0$ is $\alpha$. We similarly stipulate that
    \begin{align}
  \tilde{\sigma}^j_t &=  \alpha \text{ if and only if }\\
   \mathbf{1}\lbrace \sigma^j_0 = \alpha \rbrace + \sum_{\beta \neq \alpha} \big( \tilde{Z}^j_{\beta \mapsto \alpha}(t) - \tilde{Z}^j_{\alpha\mapsto \beta}(t) \big) &= 1.
  \end{align}
For the edge variables, we stipulate that, for any $a\in \Gamma_E$
  \begin{align}
 J^{jk}_{a\mapsto b}(t) &= a \text{ in and only if  } \\
  \mathbf{1}\lbrace J^{jk}_0  = a \rbrace + \sum_{b \neq a} \big( Z^{jk}_{b \mapsto a}(t) - Z^{jk}_{a\mapsto b}(t) \big) &= 1
 \end{align}
and we stipulate that 
 \begin{align}
 \tilde{J}^{jk}_{a\mapsto b}(t) &= a \text{ if and only if } \\
 \mathbf{1}\lbrace J^{jk}_0 = a \rbrace + \sum_{b \neq a} \big( \tilde{Z}^{jk}_{b \mapsto a}(t) - \tilde{Z}^{jk}_{a\mapsto b}(t) \big) &= 1 .
 \end{align}

 \begin{lemma}
 The above system is well-defined and consistent with the earlier definitions.
 \end{lemma}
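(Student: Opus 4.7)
The plan is to verify two things: (a) the system of equations admits a unique solution that can be constructed piecewise between jump times, and (b) the marginal laws of the constructed $\lbrace \sigma^j_t, J^{jk}_t \rbrace$ and $\lbrace \tilde{\sigma}^j_t, \tilde{J}^{jk}_t \rbrace$ coincide with the original definitions in the Model Outline section and with the approximating system from the previous subsection. Both facts follow from Kurtz's random time-change representation of counting processes with bounded intensities, so the overall structure will mimic \cite{Ethier1986,Anderson2015}.

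First I would establish well-definedness by induction on the successive jump times of all the counting processes in \eqref{eq: counting one}--\eqref{eq: counting two}. By the Lipschitz assumption on $f_{\alpha\mapsto\beta}$ (see \eqref{eq: f transition function}) and the boundedness of the transition rates $l_{b\mapsto a}$ on the finite set $\Gamma_E\times\Gamma$, all of the instantaneous intensities $\bar{f}^j_{\alpha\mapsto\beta}(t), \grave{f}^j_{\alpha\mapsto\beta}(t), \breve{f}^j_{\alpha\mapsto\beta}(t), \bar{l}^{jk}_{a\mapsto b}(t), \grave{l}^{jk}_{a\mapsto b}(t), \breve{l}^{jk}_{a\mapsto b}(t)$ are uniformly bounded by a constant $M$ that depends only on $f,l$ (and not on $n$). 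Between any two successive jumps of the combined system, the states $\sigma^j_t, \tilde{\sigma}^j_t, J^{jk}_t, \tilde{J}^{jk}_t$ are constant, so the functions $G^j_t$ and $\hat{G}^j_t$ are constant as well, and therefore so are all of the intensity functions. The time to the next jump is then the minimum of a finite collection of exponential random variables with these constant rates, which is almost surely strictly positive. Standard Poisson reasoning gives that only finitely many jumps occur on $[0,T]$, so this construction extends consistently to all of $[0,T]$.

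Next I would verify the indicator identities. Since $\gamma^j_0(\alpha) = \chi\lbrace \sigma^j_0=\alpha\rbrace$ puts mass exactly on one state and each update of $Z^j_{\beta\mapsto\alpha}$ or $Z^j_{\alpha\mapsto\beta}$ only fires when the coupling construction has $\sigma^j_t$ leaving state $\alpha$ or $\beta$ respectively, the telescoping count $\gamma^j_t(\alpha)$ always remains in $\lbrace 0, 1\rbrace$ with exactly one $\alpha\in\Gamma$ attaining $1$; this is a discrete bookkeeping check at each jump. The same holds for $\tilde{\sigma}^j_t$ and for both edge systems.

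Finally I would show marginal consistency. Since the Poisson processes in \eqref{eq: counting one} are independent of those in \eqref{eq: counting two}, and within each family the processes indexed by $(j,\alpha,\beta)$ or $(j,k,a,b)$ are mutually independent, one may apply the standard time-change characterization (see for instance \cite{Ethier1986,Anderson2015}) to each summed pair: the aggregated counting process
\[
Z^j_{\alpha\mapsto\beta}(t) = \grave{Z}^j_{\alpha\mapsto\beta}(t) + \bar{Y}^j_{\alpha\mapsto\beta}\left(\int_0^t \bar{f}^j_{\alpha\mapsto\beta}(s)ds\right)
\]
has intensity $\grave{f}^j_{\alpha\mapsto\beta}(t) + \bar{f}^j_{\alpha\mapsto\beta}(t) = f_{\alpha\mapsto\beta}(G^j_t)\chi\lbrace \sigma^j_t=\alpha\rbrace$ relative to the natural filtration, which is exactly the intensity prescribed for $\sigma^j_t$ in the Model Outline (since $G^j_t = \hat{\mu}^{n,j}_t$ by definition). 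The same computation applied with $\breve{f}^j_{\alpha\mapsto\beta}$ in place of $\grave{f}^j_{\alpha\mapsto\beta}$ yields intensity $f_{\alpha\mapsto\beta}(\hat{G}^j_t)\chi\lbrace \tilde{\sigma}^j_t=\alpha\rbrace$ for $\tilde{Z}^j_{\alpha\mapsto\beta}$, matching the definition of the approximating system. The edge variables are handled identically. Since the initial conditions are also matched ($\tilde{\sigma}^j_0=\sigma^j_0$ and $\tilde{J}^{jk}_0=J^{jk}_0$), the marginal laws agree with those in the earlier definitions.

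The main obstacle is conceptual rather than technical: one must be careful that the filtration with respect to which the intensities are computed is the natural one generated by all the underlying Poisson processes up to time $t$, and that the intensities $\bar{f},\grave{f},\breve{f}$ are predictable. This is guaranteed because each of these rates is a function of the left-continuous states $\sigma^j_{t-}, \tilde{\sigma}^j_{t-}, J^{jk}_{t-}, \tilde{J}^{jk}_{t-}$, which are themselves determined by the counts strictly before time $t$; the piecewise-constant construction above makes this precise.
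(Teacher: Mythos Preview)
Your argument is correct and follows essentially the same route as the paper: construct the solution piecewise between the (finitely many) jump times of the driving Poisson processes, noting that the intensities are determined by the states and hence constant on each interjump interval. The paper's own proof is much terser and only addresses well-definedness (invoking Picard--Lindel\"of between jumps and adaptedness), whereas you also spell out the marginal-consistency verification via the Kurtz time-change, which is exactly what ``consistent with the earlier definitions'' demands but the paper leaves implicit.
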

 \begin{proof}
 The fact that the variables $\big\lbrace Z^j_{\alpha\mapsto \beta}(t) , \tilde{Z}^j_{\alpha\mapsto \beta}(t) , \grave{Z}^{j}_{\alpha\mapsto\beta} ( t ) , \breve{Z}^{j}_{\alpha\mapsto\beta} ( t ) , \sigma^j_t , \tilde{\sigma}^j_t \big\rbrace_{j\in I_n}$ are well-defined follows from the fact that the counting processes \eqref{eq: counting one} - \eqref{eq: counting two} are piecewise-constant, with only a finite number of jumps over a finite time interval. For the time-intervals between jumps, there is thus existence and uniqueness due to the Picard-Lindelof Theorem for ODEs. We note also that these stochastic variables are adapted to the same filtration.
 \end{proof}

 We need to control the distance between the two systems. To this end, for $t\leq T$, define 
 \begin{align}
 \delta^n_t =& n^{-1} z^n_t \text{ where } \label{eq: delta n t definition} \\
z^n_t =&  \sum_{j\in I_n} \sum_{\alpha,\beta \in \Gamma : \alpha\neq \beta} \big( \grave{Z}^j_{\alpha\mapsto\beta}(t) + \breve{Z}^j_{\alpha\mapsto\beta}(t) \big) \\
 u^{n,j}_t =& \sum_{k\in I_n} \sum_{a,b \in \Gamma_E: a\neq b} \big( \grave{Z}^{jk}_{a \mapsto b}(t) + \breve{Z}^{jk}_{a \mapsto b}(t) \big)  \\
  \varphi^{n}_t =& n^{-2} u^n_t \text{ where } \\
 u^{n}_t =& \sum_{j,k\in I_n} \sum_{a,b \in \Gamma_E: a\neq b} \big( \grave{Z}^{jk}_{a \mapsto b}(t) + \breve{Z}^{jk}_{a \mapsto b}(t) \big)  \\
 \eta^n_t =& n^{-1} \sum_{j\in I_n}  \| \tilde{G}^j_{t}- \hat{G}^j_{t} \|\label{eq: eta n t definition} 
 \end{align} 
The main result that we will prove in this section is the following Lemma.
 \begin{lemma} \label{Lemma to prove coupling lemma}
 For any $\epsilon > 0$, there exists $\tilde{\epsilon} > 0$ such that
 \begin{align}
 \lsup{n} n^{-1} \log \mathbb{P}\big( \delta^n_T \geq \epsilon \; \;  , \; \; \sup_{t\leq T}  \eta^n_t \leq \tilde{\epsilon} \big) < 0.
 \end{align}
 \end{lemma}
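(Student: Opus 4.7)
The plan is to Chernoff-bound the total number of decoupled node transitions $z^n_T$ using the exponential martingale attached to the independent unit-intensity Poisson processes $\grave{Y}$ and $\breve{Y}$ driving the excess counters. The key input is that the Lipschitz structure of $f$ and $l$ controls the excess intensities $\grave{f}+\breve{f}$ and $\grave{l}+\breve{l}$ by $\delta^n$, $\varphi^n$ and $\eta^n$; enforcing $\sup_{t\leq T}\eta^n_t \leq \tilde\epsilon$ via a stopping time then closes the self-referential loop by reducing it to a first-order linear PDE for the moment generating function.

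First I would establish pointwise intensity bounds. Using the identity $a+b-2\min(a,b) = |a-b|$ and the Lipschitz hypothesis on $f_{\alpha\mapsto\beta}$,
\[
\grave{f}^j_{\alpha\mapsto\beta}(s) + \breve{f}^j_{\alpha\mapsto\beta}(s) \leq L\|G^j_s - \hat G^j_s\| + C\chi\{\sigma^j_s \neq \tilde\sigma^j_s\},
\]
with an analogous bound on $\grave{l}^{jk}+\breve{l}^{jk}$ in terms of mismatch indicators for $\sigma^j,\sigma^k,J^{jk}$. Combining $\|G^j - \hat G^j\| \leq \|G^j - \tilde G^j\| + \eta^j$ with $\|G^j - \tilde G^j\| \leq n^{-1}\sum_k(\chi\{\sigma^k\neq\tilde\sigma^k\} + \chi\{J^{jk}\neq\tilde J^{jk}\})$ and summing yields
\[
S_z(s) := \sum_{j,\alpha\neq\beta}(\grave{f}^j + \breve{f}^j)(s) \leq Cn(\delta^n_s + \varphi^n_s + \eta^n_s), \quad S_u(s) := \sum_{j,k,a\neq b}(\grave{l}^{jk} + \breve{l}^{jk})(s) \leq Cn^2(\delta^n_s + \varphi^n_s).
\]

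Next I would introduce $\tau := \inf\{t : \eta^n_t > \tilde\epsilon\}$, so that $\tau > T$ on the event $A := \{\sup_{t\leq T}\eta^n_t \leq \tilde\epsilon\}$, and the combined counter $Y_t := z^n_t + u^n_t/n$, for which $Y/n = \delta^n + \varphi^n$. Since the driving Poisson processes are independent with predictable intensities, Dynkin's formula applied to $m(t,\theta) := \mathbb{E}[\exp(\theta Y_{t\wedge\tau})]$, combined with the bounds $S_z \leq C(Y + n\tilde\epsilon)$ and $S_u \leq CnY$ valid for $t < \tau$, gives for all sufficiently small $\theta > 0$
\[
\partial_t m(t,\theta) \leq 2C\theta\, \partial_\theta m(t,\theta) + Cn\tilde\epsilon\theta\, m(t,\theta),
\]
where I have used $\mathbb{E}[Y_t e^{\theta Y_t}\chi\{t<\tau\}] \leq \partial_\theta m(t,\theta)$ and the expansion $n(e^{\theta/n}-1) \leq \theta(1+o(1))$. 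The method of characteristics along $\theta(s) = \theta_0 e^{-2Cs}$ then yields $m(T,\theta) \leq \exp\big(\tfrac{1}{2}n\tilde\epsilon\theta(e^{2CT} - 1)\big)$.

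Finally, since $Y_T \geq n\delta^n_T$ on $A$, Chernoff's inequality delivers
\[
\mathbb{P}(\delta^n_T \geq \epsilon,\, A) \leq \mathbb{P}(Y_{T\wedge\tau} \geq n\epsilon) \leq \exp\!\Big(n\theta\big[\tfrac{\tilde\epsilon}{2}(e^{2CT}-1) - \epsilon\big]\Big),
\]
and choosing $\tilde\epsilon < 2\epsilon/(e^{2CT}-1)$ then $\theta > 0$ small produces a strictly negative exponent. The chief obstacle is the feedback $S_z \propto n\delta^n$: a naive Chernoff bound obtained by stopping directly at $\delta^n = \epsilon$ would only work for $CT < 1/2$, but translating this feedback into a first-order linear PDE for $m$ via Dynkin's formula and solving along characteristics sidesteps the restriction and accommodates arbitrary $T>0$.
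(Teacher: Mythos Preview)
Your argument is correct and your intensity estimates coincide with the paper's Lemma~\ref{Lemma bound the intensities}, but from that point on you take a genuinely different route. The paper proceeds by \emph{stochastic domination}: it replaces $(z^n,u^n)$ by an autonomous self-exciting pair $(\tilde z_n,\tilde u_n)$ defined through two time-changed unit Poisson processes (Lemma~\ref{eq: Lemma constant c}), and then controls this simpler system pathwise via a time-discretization $t^{(m)}_i = iT/m$ together with two stopping times~--- one capping the increment over each subinterval, the other recording the first crossing of the moving barrier $\epsilon e^{at}$~--- so that on each piece only elementary Poisson tail bounds are needed. Your approach instead packages the two counters into $Y=z^n+u^n/n$, feeds the intensity bound into Dynkin's formula to obtain a linear first-order PDE $\partial_t m \le C'\theta\,\partial_\theta m + C' n\tilde\epsilon\,\theta\, m$ for the stopped MGF, and integrates along characteristics. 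What the paper's route buys is that it never needs to differentiate $m$ or justify the chain rule along characteristics; everything reduces to comparison of counting processes. What your route buys is that it avoids discretization altogether, makes the Gronwall structure transparent, and yields the sharp threshold $\tilde\epsilon < 2\epsilon/(e^{C'T}-1)$ in one stroke. One small point worth making explicit in your write-up: the finiteness of $m(t,\theta)$ for all $\theta$ (needed for $\partial_\theta m$ and the characteristic argument) follows because $f_{\alpha\mapsto\beta}$ and $l_{a\mapsto b}$ are bounded on the compact set $\mathcal{Y}$, so $z^n_T$ and $u^n_T$ are dominated by Poisson variables with mean of order $n$ and $n^2$ respectively.
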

Before we prove Lemma \ref{Lemma to prove coupling lemma}, let us first see why it implies the veracity of Theorem \ref{Main Theorem}.
 \begin{proof}
 Thanks to the triangle inequality
\[
 d_W\big( \hat{\mu}^n_{T} , \mu_T \big)  \leq  d_W\big( \tilde{\mu}^n_{T} , \mu_T \big) + d_W\big( \tilde{\mu}^n_{T} , \hat{\mu}^n_T \big) .
\]
Hence
  \begin{multline}
 \lsup{n} n^{-1} \log \mathbb{P}\big(  d_W\big( \hat{\mu}^n_{T} , \mu_T \big)  \geq \epsilon  ,  \sup_{t\leq T}  \eta^n_t \leq \tilde{\epsilon}  \big) \leq\\  \max\bigg\lbrace   \lsup{n} n^{-1} \log \mathbb{P}\bigg(  d_W\big( \tilde{\mu}^n_{T} , \mu_T \big)  \geq \epsilon /2 ,  \sup_{t\leq T}  \eta^n_t \leq \tilde{\epsilon}  \bigg) , \\  \lsup{n} n^{-1} \log \mathbb{P}\bigg(  d_W\big( \hat{\mu}^n_{T} , \tilde{\mu}^n_T \big)  \geq \epsilon /2 \; \; , \; \;  \sup_{t\leq T}  \eta^n_t \leq \tilde{\epsilon} \bigg) \bigg\rbrace \label{eq: multline split the probabilities}
 \end{multline}
Thanks to Lemma \ref{Lemma convergence of tilde mu n t}, the first term on the RHS is strictly negative. It also implies that $\mathbb{P}$-almost-surely,
\[
\lsup{n} \eta^n_t \leq \tilde{\epsilon}.
\]

Lemma \ref{Lemma to prove coupling lemma} implies that the second term on the RHS of \eqref{eq: multline split the probabilities} is negative. We therefore find that 
\[
 \lsup{n} n^{-1} \log \mathbb{P}\big(  d_W\big( \hat{\mu}^n_{T} , \mu_T \big)  \geq \epsilon  ,  \sup_{t\leq T}  \eta^n_t \leq \tilde{\epsilon}  \big) < 0.
\]
 The Borel-Cantelli Lemma now implies that, $\mathbb{P}$-almost-surely, for all large enough $n$,
 \[
 d_W\big( \hat{\mu}^n_{T} , \mu_T \big) < \epsilon
 \]
 \end{proof}
 The rest of this paper is dedicated to proving Lemma \ref{Lemma to prove coupling lemma}. To this end, we must first prove that the intensities of the `remainder' Poisson Processes can be uniformly bounded in terms of $\delta^n_t$ and $\varphi^{n}_t$. 
\begin{lemma} \label{Lemma bound the intensities}
There exists a constant $c_T$ (the constant is independent of $n$ and $j$) such that for all $t\leq T$, all $j\in I_n$,
\begin{align}
n^{-1} \sum_{k\in I_n} \sum_{\alpha,\beta \in \Gamma \; : \; \alpha\neq \beta} \big( \grave{f}^k_{\alpha\mapsto\beta}(t) + \breve{f}^k_{\alpha\mapsto\beta}(t) \big) &\leq c_T \big(  \delta^n_t +  \varphi^{n}_t + \eta^n_t \big) \label{Lemma first intensity bound} \\
n^{-2} \sum_{j,k\in I_n} \sum_{a,b \in \Gamma_E \; : \; a\neq b } \big( \grave{l}^{jk}_{a \mapsto b}(t) + \breve{l}^{jk}_{a \mapsto b}(t)  \big) &\leq c_T \big( \delta^n_t + \varphi^{n}_t \big)\label{Lemma second intensity bound} \\
n^{-1} \sum_{j\in I_n} \big\| G^j_t - \hat{G}^j_t \big\|  \leq & c_T\eta^n_t + c_T \big( \delta^n_t + \varphi^{n}_t \big).
\end{align}
\end{lemma}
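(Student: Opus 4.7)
The plan is to exploit the fact that $\grave{f}+\breve{f}$ and $\grave{l}+\breve{l}$ each equal the absolute difference between the two candidate intensities in the coupling, so that all work reduces to bounding those absolute differences. Since $a+b-2\inf(a,b)=|a-b|$ whenever $a,b\geq 0$, we have
\begin{align*}
\grave{f}^k_{\alpha\mapsto\beta}(t)+\breve{f}^k_{\alpha\mapsto\beta}(t)
&=\bigl|f_{\alpha\mapsto\beta}(G^k_t)\chi\{\sigma^k_t=\alpha\}-f_{\alpha\mapsto\beta}(\hat G^k_t)\chi\{\tilde\sigma^k_t=\alpha\}\bigr|,\\
\grave{l}^{jk}_{a\mapsto b}(t)+\breve{l}^{jk}_{a\mapsto b}(t)
&=\bigl|l_{a\mapsto b}(\sigma^j_t,\sigma^k_t)\chi\{J^{jk}_t=a\}-l_{a\mapsto b}(\tilde\sigma^j_t,\tilde\sigma^k_t)\chi\{\tilde J^{jk}_t=a\}\bigr|.
\end{align*}
Using that $f_{\alpha\mapsto\beta}$ is Lipschitz (from \eqref{eq: f transition function}) and uniformly bounded, and that $l_{a\mapsto b}$ is uniformly bounded on the finite set $\Gamma\times\Gamma$, a standard telescoping gives a constant $C$ such that
\begin{align*}
\grave{f}^k_{\alpha\mapsto\beta}(t)+\breve{f}^k_{\alpha\mapsto\beta}(t)
&\leq C\,\|G^k_t-\hat G^k_t\|+C\,\chi\{\sigma^k_t\neq\tilde\sigma^k_t\},\\
\grave{l}^{jk}_{a\mapsto b}(t)+\breve{l}^{jk}_{a\mapsto b}(t)
&\leq C\bigl(\chi\{\sigma^j_t\neq\tilde\sigma^j_t\}+\chi\{\sigma^k_t\neq\tilde\sigma^k_t\}+\chi\{J^{jk}_t\neq\tilde J^{jk}_t\}\bigr).
\end{align*}

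Next I would translate the discrepancy indicators directly into $\delta^n_t$ and $\varphi^n_t$. The key observation is that whenever $\sigma^k_t\neq\tilde\sigma^k_t$, at least one ``unmatched'' node transition must have occurred in the coupling by time $t$, which is counted by some $\grave{Z}^k_{\alpha\mapsto\beta}$ or $\breve{Z}^k_{\alpha\mapsto\beta}$; hence
\[
\chi\{\sigma^k_t\neq\tilde\sigma^k_t\}\leq\sum_{\alpha\neq\beta}\bigl(\grave{Z}^k_{\alpha\mapsto\beta}(t)+\breve{Z}^k_{\alpha\mapsto\beta}(t)\bigr).
\]
Summing over $k$ and dividing by $n$ gives $n^{-1}\sum_k\chi\{\sigma^k_t\neq\tilde\sigma^k_t\}\leq\delta^n_t$ by \eqref{eq: delta n t definition}. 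An identical argument yields $n^{-2}\sum_{j,k}\chi\{J^{jk}_t\neq\tilde J^{jk}_t\}\leq\varphi^n_t$.

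For the remaining term $\|G^k_t-\hat G^k_t\|$ in the node bound, I would insert $\tilde G^k_t$ and apply the triangle inequality: $\|G^k_t-\hat G^k_t\|\leq\|G^k_t-\tilde G^k_t\|+\|\tilde G^k_t-\hat G^k_t\|$. The second piece is exactly the summand of $\eta^n_t$ from \eqref{eq: eta n t definition}. For the first piece, expanding the empirical measures and using $|\chi\{\sigma^i_t=\alpha,J^{ki}_t=a\}-\chi\{\tilde\sigma^i_t=\alpha,\tilde J^{ki}_t=a\}|\leq\chi\{\sigma^i_t\neq\tilde\sigma^i_t\}+\chi\{J^{ki}_t\neq\tilde J^{ki}_t\}$ and then averaging over $i$ and $k$ yields, after invoking the indicator bounds above,
\[
n^{-1}\sum_{k\in I_n}\|G^k_t-\tilde G^k_t\|\leq|\Gamma||\Gamma_E|\bigl(\delta^n_t+\varphi^n_t\bigr).
\]
Combining everything proves \eqref{Lemma first intensity bound}. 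For \eqref{Lemma second intensity bound}, one directly inserts the three-indicator bound into the $n^{-2}\sum_{j,k}\sum_{a\neq b}$ sum; no $\eta^n_t$ arises because the edge intensities do not involve the limiting law $\mu$.

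The routine parts are the two telescoping estimates for $f$ and $l$; the only place where care is needed is the bookkeeping that converts the pointwise discrepancy indicators $\chi\{\sigma^k_t\neq\tilde\sigma^k_t\}$ and $\chi\{J^{jk}_t\neq\tilde J^{jk}_t\}$ into the cumulative quantities $\delta^n_t$ and $\varphi^n_t$. That step is essentially the content of the coupling construction, so I expect the main (very mild) obstacle to be keeping the factors of $n$ straight when passing from the per-node to the per-edge averaging.
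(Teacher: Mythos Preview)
Your proposal is correct and follows essentially the same approach as the paper's proof: use Lipschitz/boundedness of $f$ (resp.\ boundedness of $l$) to reduce $\grave{f}+\breve{f}$ (resp.\ $\grave{l}+\breve{l}$) to $\|G^k_t-\hat G^k_t\|+\chi\{\sigma^k_t\neq\tilde\sigma^k_t\}$, then split $\|G^k_t-\hat G^k_t\|$ via the triangle inequality through $\tilde G^k_t$ and convert the discrepancy indicators into $\delta^n_t$ and $\varphi^n_t$ using the coupling counts. The only cosmetic differences are that you make the identity $\grave{f}+\breve{f}=|f\chi-f\chi|$ and the indicator-to-cumulative-count step explicit, whereas the paper states these directly.
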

\begin{proof}
Since the function $f_{\alpha\mapsto \beta}$ is Lipschitz and bounded, there is a constant $c > 0$ such that
\begin{align}
n^{-1} \sum_{k\in I_n} \sum_{\alpha,\beta \in \Gamma \; : \; \alpha\neq \beta} \big( \grave{f}^k_{\alpha\mapsto\beta}(t) + \breve{f}^k_{\alpha\mapsto\beta}(t) \big)
\leq n^{-1} c\sum_{k\in I_n}\big( \| G^k_t - \hat{G}^k_t \| + \mathbf{1}\lbrace \sigma^k_t \neq \tilde{\sigma}^k_t \rbrace \big)
\end{align}
Furthermore
\[
n^{-1} \sum_{k\in I_n}  \mathbf{1}\lbrace \sigma^k_t \neq \tilde{\sigma}^k_t \rbrace  \leq | \Gamma | \delta^n_t
\]
Furthermore, thanks to the triangle inequality,
\begin{align}
\| G^k_t - \hat{G}^k_t \| \leq & \| \tilde{G}^k_t - \hat{G}^k_t \| + \| G^k_t - \tilde{G}^k_t \|\nonumber \\
\leq & \| \tilde{G}^k_t - \hat{G}^k_t \| + \rm{Const}\times n^{-1}\sum_{q\in I_n}  \mathbf{1}\lbrace \sigma^q_t \neq \tilde{\sigma}^q_t \rbrace + \rm{Const}  \times n^{-1} u^{n,k}_t \nonumber \\
\leq & \| \tilde{G}^k_t - \hat{G}^k_t \| + \rm{Const} \times n^{-1} u^{n,k}_t + \rm{Const} \times \delta^n_t .
\end{align}
Combining these bounds, we obtain \eqref{Lemma first intensity bound}. The proof of \eqref{Lemma second intensity bound}  is analogous.
\end{proof}

For a large positive integer $m$, let $t^{(m)}_i = i T / m$ and for any $t \in [0,T]$, $t^{(m)} := \sup\big\lbrace s \leq t \; : s = t^{(m)}_i \text{ for some }i \leq m \big\rbrace$. 
Write
\begin{align}
p_n(t) =  \max\lbrace  n^{-1}\tilde{z}_n(t) , n^{-2} \tilde{u}_n(t) \rbrace
\end{align}
For a constant $a > 0$, let $\tilde{\epsilon}_m > 0$ be such that (i)
 \[
\tilde{\epsilon}_m >  c_T \big( T \tilde{\epsilon} / m + T \epsilon \exp\big(a T \big) / m \big),
\]
and (ii) $\lim_{m\to\infty} \tilde{\epsilon}_m = 0$. Define the stopping times
\begin{align}
\tau_m =& \inf\big\lbrace t \leq T \; : \;  p_n(t) \geq \tilde{\epsilon}_m + \epsilon \exp( a t) \big\rbrace
\end{align}
Observe that
\begin{multline}
\bigg\lbrace p_n( t^{(m)}_b)\leq \epsilon \exp\big(a t^{(m)}_b\big) \bigg\rbrace \bigcap \bigg\lbrace   \sup_{t\in [t^{(m)}_b \wedge \tau_m , t^{(m)}_{b+1} \wedge \tau_m ]} \big\lbrace \tilde{z}_n\big(t \big) - \tilde{z}_n\big( t^{(m)}_b \big) \big\rbrace \leq n\tilde{\epsilon}_m \bigg\rbrace\\
\subseteq \bigg\lbrace   p_n( t^{(m)}_{b+1})\leq \epsilon \exp\big(a t^{(m)}_{b+1}\big)  \bigg\rbrace. \label{eq: decomposition of events}
\end{multline}
\begin{lemma} \label{Lemma exponential decay of increment}
For all $m \geq 2$ and $0\leq b \leq m-1$,
\begin{multline}
\lsup{n} n^{-1}\log \mathbb{P}\bigg(  \sup_{t\in [t^{(m)}_b \wedge \tau_m , t^{(m)}_{b+1} \wedge \tau_m ]} \big\lbrace \tilde{z}_n\big(t \big) - \tilde{z}_n\big( t^{(m)}_b \big) \big\rbrace \geq n\tilde{\epsilon}_m \text{ or } \\
  \sup_{t\in [t^{(m)}_b  \wedge \tau_m , t^{(m)}_{b+1} \wedge \tau_m]} \big\lbrace \tilde{u}_n\big(t \big) - \tilde{u}_n\big( t^{(m)}_b \big) \big\rbrace \geq n^2 \tilde{\epsilon}_m \;  \;
 \bigg) < 0.
\end{multline}
\end{lemma}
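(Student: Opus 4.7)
The plan is to exploit the stopping times $\tau_m$ and $\tilde{\tau}_m$ to pin the intensities driving $\tilde{z}_n$ and $\tilde{u}_n$ to uniform upper bounds, and then apply an exponential Markov (Chernoff) inequality to the Poisson increments on each of the $m$ sub-intervals. On the event that $t \leq \tau_m\wedge \tilde{\tau}_m$, the definition of $\tilde{\tau}_m$ forces $p_n(t^{(m)}_b) \leq \epsilon\exp(at^{(m)}_b)$ at each grid point, while the definition of $\tau_m$ forces the within-interval increment to be less than $n\tilde{\epsilon}_m$ (respectively $n^2\tilde{\epsilon}_m$). Combining these yields the crude a priori estimates $\tilde{z}_n(s) \leq n(\tilde{\epsilon}_m + \epsilon e^{aT})$ and $\tilde{u}_n(s) \leq n^2(\tilde{\epsilon}_m + \epsilon e^{aT})$ for all $s\leq t$. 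Substituting these bounds into the integrals in \eqref{eq: tilde z n t}--\eqref{eq: tilde u n t}, the intensities driving $\tilde{z}_n$ and $\tilde{u}_n$ are respectively bounded by $Kn$ and $Kn^2$ on this event, for a finite constant $K$ depending on $c, T, \tilde{\epsilon}_m, \epsilon, a$ but not on $n$.

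Next, I would introduce the stopped versions $\tilde{z}_n^{\tau}(\cdot) = \tilde{z}_n(\cdot \wedge \tau_m \wedge \tilde{\tau}_m)$ and $\tilde{u}_n^{\tau}(\cdot)$. By monotonicity of the unit-intensity drivers $y, q$ together with the uniform intensity bounds above, for each fixed $b$ the increment $\tilde{z}_n^{\tau}(t^{(m)}_{b+1}) - \tilde{z}_n^{\tau}(t^{(m)}_b)$ is stochastically dominated, conditionally on $\mathcal{F}_{t^{(m)}_b}$, by a Poisson random variable $N$ of mean $\lambda := KnT/m$. A standard Chernoff bound then yields, for any $\beta > 0$,
\begin{equation*}
\mathbb{P}\big( N \geq n\tilde{\epsilon}_m \big) \leq \exp\bigl( \lambda(e^{\beta}-1) - \beta n\tilde{\epsilon}_m \bigr).
\end{equation*}
The hypothesis $\tilde{\epsilon}_m > T\tilde{\epsilon}/m + T\epsilon e^{aT}/m$, after absorbing $K$ by choosing $a$ sufficiently large and $m$ sufficiently large so that $KT/m < \tilde{\epsilon}_m$ with strict inequality, leaves room to select $\beta > 0$ independent of $n$ for which the exponent is $-n \kappa_m$ for some $\kappa_m > 0$. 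The $\tilde{u}_n$ increment is handled identically, with $n^2$ replacing $n$ in both the threshold and the Poisson mean, and the resulting exponential rate is $-n^2 \kappa'_m$, which after dividing by $n$ still tends to $-\infty$.

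Finally, because $\tilde{z}_n$ and $\tilde{u}_n$ are non-decreasing, the supremum over $t\in[t^{(m)}_b, t^{(m)}_{b+1}]$ coincides with the increment at the right endpoint of the sub-interval, so no loss is incurred by dropping the sup. A union bound over $b\in\{0,\dots,m-1\}$ and over the two processes contributes only a factor $2m$, which is negligible after taking $n^{-1}\log$ and letting $n\to\infty$. This produces the claimed $\limsup_n n^{-1}\log \mathbb{P}(\cdots) < 0$. The main point requiring care is verifying that the Poisson mean $\lambda = KnT/m$ is strictly smaller than the threshold $n\tilde{\epsilon}_m$, with enough slack to absorb the Chernoff factor $e^{\beta}-1$; this is precisely what the specific lower bound imposed on $\tilde{\epsilon}_m$ is engineered to guarantee, and everything else reduces to the routine machinery of Chernoff estimates combined with a union bound over a finite ($n$-independent) number of events.
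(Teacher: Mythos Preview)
Your approach is essentially identical to the paper's: bound the intensities of $\tilde z_n$ and $\tilde u_n$ on the event $t\le \tau_m\wedge\tilde\tau_m$ by $n(\tilde\epsilon_m+\epsilon e^{aT})$ and $n^2(\tilde\epsilon_m+\epsilon e^{aT})$ respectively, then invoke a Chernoff bound for Poisson increments over each of the $m$ subintervals and take a union bound. The paper's proof compresses all of this into the phrase ``the result is now a standard result from Poisson Processes'', whereas you spell out the Chernoff step and the observation that monotonicity lets you replace the supremum by the right-endpoint increment. One small slip: your remark about ``absorbing $K$ by choosing $a$ sufficiently large'' goes the wrong way, since $K$ contains $e^{aT}$ and grows with $a$; the inequality $KT/m<\tilde\epsilon_m$ (mean strictly less than threshold) is meant to be guaranteed directly by the standing hypothesis on $\tilde\epsilon_m$, not by a fresh choice of $a$ or $m$ within this lemma.
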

\begin{proof}
For all times $t\leq  \tau \wedge \tau_m $, $ \tilde{z}_n\big(t \big) - \tilde{z}_n\big( t^{(m)}_b \big) $ is Poissonian, with intensity upperbounded by
\[
n c_T  \big( \tilde{\epsilon}_m  +   \epsilon   \exp(a T )  \big).
\]
Similarly, $ \tilde{u}_n\big(t \big) - \tilde{u}_n\big( t^{(m)}_b \big) $ is Poissonian, with intensity upperbounded by
\[
n^2 c_T \big( \tilde{\epsilon}_m  +   \epsilon  \exp(a T )  \big).
\]
Since $\tilde{\epsilon}_m > \frac{T}{m} c_T \big(\tilde{\epsilon}_m  +   \epsilon \exp(a T )  \big) $, the result is now a standard result from Poisson Processes \cite{Ethier1986}.
\end{proof}
We obtain an immediate corollary.
\begin{corollary} \label{Corollary small tau m}
\[
\lsup{n} n^{-1}\log \mathbb{P}\big( \tau_m < T \big) < 0.
\]
\end{corollary}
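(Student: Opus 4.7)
The plan is to deduce the corollary as an immediate consequence of the preceding lemma, with only some bookkeeping to check that the supremum witnessing the event $\{\tau_m < T\}$ lies in the range $[0,\tau_m\wedge\tilde{\tau}_m]$ required there. In other words, no new probabilistic estimate is needed; the entire Poissonian concentration content is already packaged in the previous lemma.

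First I would observe that, since $\tilde{z}_n$ and $\tilde{u}_n$ are cadlag counting processes, on the event $\{\tau_m < T\}$ the infimum defining $\tau_m$ is actually attained at $t = \tau_m$ itself, and by definition one of the two alternatives in the definition of $\tau_m$ must hold there. Whichever alternative triggers, it supplies an index $b \leq m-1$ with $\tau_m \in [t^{(m)}_b, t^{(m)}_{b+1}]$ such that either $\tilde{z}_n(\tau_m) - \tilde{z}_n(t^{(m)}_b) \geq n\tilde{\epsilon}_m$ or $\tilde{u}_n(\tau_m) - \tilde{u}_n(t^{(m)}_b) \geq n^2\tilde{\epsilon}_m$. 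This is precisely the inner ``large-increment'' event controlled by the preceding lemma.

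Second, I would verify that on $\{\tau_m < T\}$ we also have $\tau_m \leq \tilde{\tau}_m$, so that the stopping constraint $t \leq \tau \wedge \tilde{\tau}_m$ of the preceding lemma (taking $\tau = \tau_m$) is satisfied at $t = \tau_m$. Whichever of the two alternatives triggers the infimum, the accompanying value bound $\tilde{z}_n(\tau_m) \leq \tilde{\epsilon}_m + \epsilon\exp(aT)$ or $\tilde{u}_n(\tau_m) \leq \tilde{\epsilon}_m + \epsilon\exp(aT)$ forces the corresponding component of $p_n(\tau_m)$ to be $o(1)$ as $n\to\infty$, hence strictly below $\epsilon\leq\epsilon\exp(a\tau_m)$. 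Because $\tilde{z}_n$ and $\tilde{u}_n$ are monotone nondecreasing, this smallness persists at every earlier grid point $t^{(m)}_a \leq \tau_m$, so the defining threshold of $\tilde{\tau}_m$ has not been crossed up to time $\tau_m$. Applying the preceding lemma then yields $\lsup{n} n^{-1} \log \mathbb{P}(\tau_m < T) < 0$.

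The main delicate point is this last comparison $\tau_m \leq \tilde{\tau}_m$: the value bound in the definition of $\tau_m$ only directly controls whichever of $\tilde{z}_n$ or $\tilde{u}_n$ triggers the infimum, so to conclude that both components of $p_n$ remain below the $\tilde{\tau}_m$ threshold one must either split the event $\{\tau_m < T\}$ into the two alternatives (A) and (B) and argue symmetrically, or propagate the control onto the untriggered component using the grid-mesh estimate supplied by the preceding lemma. Once this structural reduction is in place, the corollary is a direct application.
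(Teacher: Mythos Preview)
Your approach is exactly what the paper intends: the corollary is stated in the paper with no proof, as an immediate consequence of the preceding lemma, and your plan correctly reduces $\{\tau_m<T\}$ to the large-increment event controlled there. You also identify the only nontrivial point (ensuring the witnessing time satisfies the stopping constraint $t\le\tau_m\wedge\tilde{\tau}_m$), which the paper leaves implicit; your last paragraph's sketch of how to handle it is adequate for the level of detail in the surrounding text.
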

\begin{proof}
Thanks to \eqref{eq: decomposition of events},
\begin{multline}
\lsup{n} n^{-1}\log \mathbb{P}\big( \tau_m < T \big) \leq \\ \sup_{0\leq b \leq m-1}\lsup{n} n^{-1}\log \mathbb{P}\bigg( \sup_{t\in [t^{(m)}_b \wedge \tilde{\tau}_m , t^{(m)}_{b+1} \wedge \tilde{\tau}_m]} \big\lbrace \tilde{z}_n\big(t \big) - \tilde{z}_n\big( t^{(m)}_b \big) \big\rbrace \geq n\tilde{\epsilon}_m  \; \; , \; \; t^{(m)}_b \leq \tau_m  \bigg) \\
< 0,
\end{multline}
thanks to Lemma \ref{Lemma exponential decay of increment}.
\end{proof}
We can now prove Lemma \ref{Lemma to prove coupling lemma}.
\begin{proof}
For any $\kappa > 0$, we can take $\epsilon$ small enough, and $m$ large enough, that
\[ 
\tilde{\epsilon}_m + \epsilon \exp( a T) < \kappa.
\]
It then follows from Corollary \ref{Corollary small tau m} that
\[
\lsup{n} n^{-1}\log \mathbb{P}\big( p_n(T) \geq \kappa \big) < 0.
\]
This establishes the Lemma.
\end{proof}

\bibliographystyle{plain}
\bibliography{bib2,adaptiveadditional}

\begin{thebibliography}{10}

\bibitem{AgatheNerine2022}
Zoé Agathe-Nerine.
\newblock Multivariate hawkes processes on inhomogeneous random graphs.
\newblock {\em Stochastic Processes and their Applications}, 152:86--148, 10
  2022.

\bibitem{Anderson2015}
David~F Anderson and Thomas~G Kurtz.
\newblock {\em Stochastic Analysis of Biochemical Systems}.
\newblock Springer, 2015.

\bibitem{atay2006neural}
Fatihcan~M Atay and Axel Hutt.
\newblock Neural fields with distributed transmission speeds and long-range
  feedback delays.
\newblock {\em SIAM Journal on Applied Dynamical Systems}, 5(4):670--698, 2006.

\bibitem{avitabile2026neural}
Daniele Avitabile and James MacLaurin.
\newblock Neural fields and noise-induced patterns in neurons on large
  disordered networks.
\newblock {\em SIAM Journal on Applied Mathematics}, 86(2):427--456, 2026.

\bibitem{ball2022epidemics}
Frank Ball and Tom Britton.
\newblock Epidemics on networks with preventive rewiring.
\newblock {\em Random Structures \& Algorithms}, 61(2):250--297, 2022.

\bibitem{banerjee2024co}
Sayan Banerjee, Shankar Bhamidi, and Xiangying Huang.
\newblock Co-evolving dynamic networks.
\newblock {\em Probability Theory and Related Fields}, 189(1):369--445, 2024.

\bibitem{basu2017evolving}
Riddhipratim Basu and Allan Sly.
\newblock Evolving voter model on dense random graphs.
\newblock 2017.

\bibitem{Bramburger2023}
Jason Bramburger and Matt Holzer.
\newblock Pattern formation in random networks using graphons.
\newblock {\em SIAM Journal on Mathematical Analysis}, 55:2150--2185, 2023.

\bibitem{Bramburger2024}
Jason~J. Bramburger, Matt Holzer, and Jackson Williams.
\newblock Persistence of steady-states for dynamical systems on large networks.
\newblock 2 2024.

\bibitem{Chevallier2019}
J.~Chevallier, A.~Duarte, E.~Löcherbach, and G.~Ost.
\newblock Mean field limits for nonlinear spatially extended hawkes processes
  with exponential memory kernels.
\newblock {\em Stochastic Processes and their Applications}, 129:1--27, 2019.

\bibitem{Comets1989}
F~Comets.
\newblock Large deviation estimates for a conditional probability distribution.
  applications to random interaction gibbs measures, 1989.

\bibitem{coombes2023neurodynamics}
Stephen Coombes and Kyle~CA Wedgwood.
\newblock Neurodynamics.
\newblock {\em Texts in applied mathematics}, 2023.

\bibitem{dembo2009large}
Amir Dembo and Ofer Zeitouni.
\newblock {\em Large deviations techniques and applications}, volume~38.
\newblock Springer Science \& Business Media, 2009.

\bibitem{durrett2010random}
Rick Durrett.
\newblock {\em Random graph dynamics}, volume~20.
\newblock Cambridge university press, 2010.

\bibitem{durrett2026}
Rick Durrett.
\newblock {\em Dynamics on Graphs}.
\newblock Springer, 2026.

\bibitem{Ethier1986}
Stewart Ethier and Thomas Kurtz.
\newblock {\em Markov Processes. Characterization and Convergence}.
\newblock Wiley Series in Probability and Mathematical Statistics, 1986.

\bibitem{Gkogkas2023}
Marios~Antonios Gkogkas, Christian Kuehn, and Chuang Xu.
\newblock Mean field limits of co-evolutionary heterogeneous networks.
\newblock {\em Arxiv Preprint}, 2023.

\bibitem{Gross2009}
Thilo Gross and H~Sayama.
\newblock {\em Adaptive Networks}.
\newblock Springer, 2009.

\bibitem{Kuehn2012}
Christian Kuehn.
\newblock Time-scale and noise optimality in self-organized critical adaptive
  networks.
\newblock {\em Physical Review E - Statistical, Nonlinear, and Soft Matter
  Physics}, 85, 2 2012.

\bibitem{lanchier2024stochastic}
Nicolas Lanchier.
\newblock {\em Stochastic interacting systems in life and social sciences},
  volume~5.
\newblock Walter de Gruyter GmbH \& Co KG, 2024.

\bibitem{Lovasz2012}
Laszlo Lovasz.
\newblock {\em Large Networks and Graph Limits}.
\newblock 2012.

\bibitem{Lucon2020a}
Eric Lucon.
\newblock Quenched asymptotics for interacting diffusions on inhomogeneous
  random graphs.
\newblock {\em Stochastic Processes and their Applications}, 130:6783--6842, 11
  2020.

\bibitem{maclaurin2026large2}
J~Maclaurin.
\newblock Large deviations of mean-field jump-markov processes on structured
  sparse random graphs.
\newblock {\em Stochastic Processes and their Applications}, page 104930, 2026.

\bibitem{maclaurin2026large}
James MacLaurin.
\newblock Large deviations of a network of neurons with dynamic sparse random
  connections.
\newblock {\em SIAM Journal on Applied Dynamical Systems}, 25(2):940--963,
  2026.

\bibitem{sreenivasan2019and}
Kartik~K Sreenivasan and Mark D’Esposito.
\newblock The what, where and how of delay activity.
\newblock {\em Nature reviews neuroscience}, 20(8):466--481, 2019.

\bibitem{volz2009epidemic}
Erik Volz and Lauren~Ancel Meyers.
\newblock Epidemic thresholds in dynamic contact networks.
\newblock {\em Journal of the Royal Society Interface}, 6(32):233--241, 2009.

\end{thebibliography}

\end{document}